\numberwithin{equation}{section}
\renewcommand{\vec}[1]{\boldsymbol{#1}}
\newtheorem{theorem}{Theorem}[section]
\newtheorem{lemma}{Lemma}[section]
\newtheorem{corollary}{Corollary}[section]
\newtheorem{remark}{Remark}[section]
\theoremstyle{definition}
\begin{document}

\title[Biorthogonality and para-orthogonality of $R_I$ polynomials]
{Biorthogonality and para-orthogonality of $R_I$ polynomials}


\author[K. K. Behera]{Kiran Kumar Behera}
\address{
Department of Mathematics,
Indian Institute of Technology, Roorkee-247667,
Uttarakhand, India}
\curraddr{}
\email{krn.behera@gmail.com}
\thanks{}

\author[A. Swaminathan]{A. Swaminathan}
\address{
Department of Mathematics,
Indian Institute of Technology, Roorkee-247667,
Uttarakhand, India}
\curraddr{}
\email{mathswami@gmail.com, swamifma@iitr.ac.in}
\thanks{}

\subjclass[2010]{Primary 42C05, 15A18, 33C45}

\keywords{$R_{I}$ recurrence relations,
Linear combinations of polynomials, Biorthogonality,
Para-orthogonal polynomials}

\date{}

\dedicatory{}

\commby{}

\begin{abstract}
In this paper, a sequence of linear combination of $R_{I}$
polynomials such that the terms in this
sequence have a common zero is constructed.
A biorthogonality relation arising from such a sequence is discussed.
Besides, a sequence of para-orthogonal polynomials
by removing the common zero
using suitable conditions is obtained.
Finally, a case of hypergeometric functions is studied
to illustrate the results obtained.
\end{abstract}

\maketitle

\pagestyle{myheadings}
\markboth
{K. K. Behera and A. Swaminathan}
{Biorthogonality and para-orthogonality of $R_{I}$ polynomials}

\section{Introduction}
Recurrence relations of the form
\begin{align}
\label{eqn: ttrr for RI polynomials Pn}
\mathcal{P}_{n+1}(\lambda)=\rho_n(\lambda-\beta_n)\mathcal{P}_{n}(\lambda)+
\tau_n(\lambda-\gamma_n)\mathcal{P}_{n-1}(\lambda),
\quad n\geq1,
\end{align}
with $\mathcal{P}_1(\lambda)=\rho_0(\lambda-\beta_0)$
and $\mathcal{P}_0(\lambda)=1$ are well studied
\cite{Ismail-Masson-generalized-orthogonality-JAT-1995}.
Here, $\lambda$ is a real or complex variable.
In addition to the restrictions
$\rho_n\neq0$ and $\tau_n\neq0$, $n\geq0$,
it was shown that if
one also assumes $\mathcal{P}_{n}(\gamma_{n})\neq0$, $n\geq1$, in
\eqref{eqn: ttrr for RI polynomials Pn}, then there exists a linear functional
$\mathcal{M}$ such that the orthogonality relations
\begin{align*}
\mathcal{M}[\gamma_0]\neq0,
\quad
\mathcal{M}\left[\frac{\lambda^{k}}
{\prod_{k=1}^{n}(\lambda-\gamma_k)}
\mathcal{P}_n(\lambda)\right]
\neq0,\quad
0\leq k<n,
\end{align*}
hold \cite[Theorem 2.1]{Ismail-Masson-generalized-orthogonality-JAT-1995}.
Following
\cite{Ismail-Masson-generalized-orthogonality-JAT-1995},
the recurrence relation
\eqref{eqn: ttrr for RI polynomials Pn} will be referred as
recurrence relation of $R_{I}$ type and
$\mathcal{P}_n(\lambda)$, $n\geq1$,
generated by it as
$R_{I}$ polynomials.

A non-trivial positive measure of orthogonality defined
either on the unit circle or
on a subset of the real axis is associated with
\eqref{eqn: ttrr for RI polynomials Pn}
whenever
$\gamma_n=0$, $n\geq1$
and the parameters satisfy specific conditions.
For instance, if $\rho_n>0$, $\beta_n>0$ and $\tau_n<0$,
then the corresponding polynomials satisfy the
Laurent orthogonality property
\cite{Jones-Thron-strong-stieltjes-moment-AMS-1980}
(see also \cite{Silva-Ranga-bounds-complex-zeros-JAT-2005})
\begin{align*}
\int_{0}^{\infty}t^{-n+s}\mathcal{P}_n(t)d\phi(t)=0,
\quad s=0,1,\cdots, n-1.
\end{align*}
Similarly, while $\rho_n=1$, $\beta_n\neq0$ and $\tau_n\neq0$,
there exists a positive measure $\mu$ on the unit circle such that
$\{\mathcal{P}_n(\lambda)\}_{n=1}^{\infty}$
is a sequence of Szeg\H{o} polynomials
\cite[Theorem 2.1]{Ranga-szego-polynomials-2010-AMS}
whenever $0<\tau_n\beta_n^{-1}<1-|\mathcal{P}_n(0)|^2$, $n\geq1$.
The theory of polynomials orthogonal on the unit circle is a classical one.
For basic results in this direction, we
refer to the monographs
\cite{Simon-book-Part-1, Szego-book}
and references therein.

Results on linear combination of orthogonal polynomials are abundant in
literature and are studied in the context of quasi-orthogonality. A polynomial
sequence $\{q_n(x)\}_{n=0}^{\infty}$ is said to be
quasi-orthogonal of order $r$ on $[a,b]\subseteq\mathbb{R}$
if it satisfies the orthogonality conditions
\cite{Brezinski-Driver-ANM-2004-quasi-orthogonality}
\begin{align}
\label{eqn: definition of quasi orthogonality}
\int_{a}^{b}x^kq_n(x)\omega(x)dx\left\{
                                  \begin{array}{ll}
                                    =0, & \hbox{$k=0,\cdots,n-r$;} \\
                                    \neq 0, & \hbox{$k=n-r$.}
                                  \end{array}
                                \right.
\end{align}
A detailed exploration of the concept of quasi-orthogonality was first made
in 
\cite{Riesz-quasi-orthogonality} (see also
\cite{Chihara-quasi-orthogonalit-AMS-1957})
while studying the solution of the Hamburger moment problem.
The case $r=1$ \cite{Fejer-quasi-orthogonality-1933}
and the general case
\cite{Shohat-quasi-orthogonality-AMS-1937}
were studied extensively, with their algebraic properties
being investigated by several authors
\cite{Chihara-quasi-orthogonalit-AMS-1957,
Dickinson-quasi-orthogonality-AMS-1961,
Draux-quasi-order-r-ITSF-2016}.
Applications of quasi-orthogonality to families of classical orthogonal
polynomials, in particular, to the location of their zeros
are also investigated
\cite{Brezinski-Driver-ANM-2004-quasi-orthogonality}.

A necessary and sufficient condition
\cite{Chihara-quasi-orthogonalit-AMS-1957}
for $q_n(x)$ of degree $n$ to be quasi-orthogonal
of order $r$ with respect to $\omega(x)$ on $[a,b]$
is that there exists another sequence of
polynomials $\{p_n(x)\}_{n=0}^{\infty}$ orthogonal with respect to $\omega(x)$
on $[a,b]$ such that
\begin{align*}
q_n(x)=c_0p_n(x)+c_1p_{n-1}(x)+\cdots+c_rp_{n-r}(x),
\end{align*}
where $c_i$ depend only on $n$ and $c_0c_r\neq0$.
Linear combination of polynomials which are
orthogonal either on the real line or on the unit circle has also been studied
as an independent problem
\cite{Marcellan-linear-combinations-1996-advances,
Alfaro-linear-combinations-JCAM-2010}, with conditions being
obtained for the orthogonality of such linear combinations.
We note that if $r=0$ in \eqref{eqn: definition of quasi orthogonality},
we recover the orthogonality condition for the polynomial $\{p_n(x)\}_{n=0}^{\infty}$
on the real line.

Motivated by linear combinations of polynomials and
their orthogonality as well as algebraic properties,
our aim in the present paper
is to study the linear combination
of two successive $R_I$ polynomials of a sequence
$\{\mathcal{P}_n(\lambda)\}_{n=0}^{\infty}$
that satisfies
\eqref{eqn: ttrr for RI polynomials Pn}
such that
\begin{align*}
  \mathcal{Q}_n(\lambda):=
  \mathcal{P}_n(\lambda)+\alpha_n\mathcal{P}_{n-1}(\lambda),
  \quad
  \alpha_n\in\mathbb{R}\setminus\{0\},
  \quad n\geq0,
\end{align*}
where $\beta_0\neq0,\pm1$ and $\beta_n\neq0$, $n\geq1$.
We construct a unique sequence $\{\alpha_n\}_{n=0}^{\infty}$
such that
$\{\mathcal{Q}_n(\lambda)\}_{n=0}^{\infty}$
not only satisfies mixed recurrence relations of
$R_{I}$ and $R_{II}$ type but also has a common zero.
Note that
\begin{align*}
\mathcal{\hat{P}}_{n+1}(\lambda)=
\hat{\rho}_n(\lambda-\hat{\beta}_n)\mathcal{\hat{P}}_{n}(\lambda)+
\hat{\tau}_n(\lambda-\hat{\gamma}_n^{(1)})(\lambda-\hat{\gamma}_n^{(2)})
\mathcal{\hat{P}}_{n-1}(\lambda),
\quad n\geq1,
\end{align*}
with $\mathcal{\hat{P}}_1(\lambda)=\hat{\rho}_0(\lambda-\hat{\beta}_0)$ and
$\mathcal{\hat{P}}_0(\lambda)=1$
is called a recurrence relation of $R_{II}$ type
if $\hat{\rho}_n\neq0$ and $\hat{\tau}_n\neq0$ for $n\geq0$
\cite{Ismail-Masson-generalized-orthogonality-JAT-1995}.
Common zeros of an orthogonal sequence have been considered in the past,
see for example
\cite{Driver-Muldoon-common-zeros-Laguerre-JAT-2015,
Gibson-common-zeros-JAT-2000,
Wong-1st-2nd-kind-POP-JAT-2007}.
However, the novelty in our approach is that we actually construct
such a sequence before studying its orthogonality properties.

The polynomials $\mathcal{Q}_n(\lambda)$, $n\geq1$,
are shown to satisfy biorthogonality relations
that are obtained from their eigenvalue representations.
Two polynomial sequences $\{\phi_n(\lambda)\}_{n=1}^{\infty}$ and
$\{\psi_n(\lambda)\}_{n=1}^{\infty}$
are said to biorthogonal with respect to a moment functional
$\mathfrak{N}$ if the orthogonality relations
\begin{align*}
\mathfrak{N}(\phi_n(\lambda)\psi_m(\lambda))=\kappa_n\delta_{n,m},
\quad \kappa_n\neq0,
\quad n,m\geq0,
\end{align*}
hold, where in contrast to orthogonal polynomials,
two different sequences of polynomials are used for the
biorthogonality condition.

With certain additional conditions, we also show that a
para-orthogonal polynomial of degree $n$ can be obtained from
$\mathcal{Q}_{n+1}(\lambda)$.
The para-orthogonal polynomials introduced in
\cite{Jones-Njasad-Thron-Moment-OP-CF-1989-BLMS}
have their zeros on the unit circle and are used to
construct numerical quadrature formula on the unit circle.

The rest of the paper is organised as follows.
In Section \ref{sec: Section-2-mixed recurrence relation}
we obtain the mixed recurrence relations using which we show that
$\mathcal{Q}_n(\lambda)$ has a common zero for $n\geq1$.
Section \ref{sec: orthogonality relations from linear combinations} gives
biorthogonality relations while in
Section \ref{sec: para-orthogonality relations} we obtain a para-orthogonal
polynomial $\mathcal{R}_n(\lambda)$ from $\mathcal{Q}_{n+1}(\lambda)$.
We provide an illustration of polynomials of  Gaussian hypergeometric form
leading to the same para-orthogonal polynomial
having two different representations in Section \ref{sec: Section-3-illustration}.

\section{Mixed recurrence relations of $R_I$ and $R_{II}$ type}
\label{sec: Section-2-mixed recurrence relation}
It is known \cite{Draux-quasi-order-r-ITSF-2016,
Dickinson-quasi-orthogonality-AMS-1961}
that quasi-orthogonal polynomials satisfy
recurrence relations with polynomial coefficients.
In some other cases, these recurrence relations are of mixed type,
depending on the parameter involved
and are used to obtain information on the zeros of the
polynomials satisfying such relations
\cite{Jordaan-mixed-recurrence-Acta-Hungarica-2010,
Koepf-mixed-recurrence-ANM-2018}.

The first result shows that $\mathcal{Q}_n(\lambda)$,
$n\geq1$
satisfies a three term recurrence relation with polynomial
coefficients of degree at most two.
\begin{theorem}
\label{thm: ttrr for Q-n}
  Given a sequence $\{\mathcal{P}_n(\lambda)\}_{n=1}^{\infty}$
  of polynomials satisfying $R_I$ type recurrence relations
  \eqref{eqn: ttrr for RI polynomials Pn},
  consider the linear combinations of two successive such polynomials
  \begin{align}
  \label{eqn: Q-n as linear combination in theorem}
  \mathcal{Q}_n(\lambda)=
  \mathcal{P}_n(\lambda)+\alpha_n\mathcal{P}_{n-1}(\lambda),
  \quad \alpha_n\in\mathbb{R}\setminus\{0\},
  \quad n\geq0,
  \end{align}
  where $\mathcal{Q}_0(\lambda)=\mathcal{P}_0(\lambda)=1$.
  Then there exist constants
  $\{p_n, q_n, r_n, s_n, t_n, u_n, v_n, w_n\}$ such that
  $\{\mathcal{Q}_n(\lambda)\}_{n=1}^{\infty}$
  satisfies a three term recurrence relation of the form
  \begin{align}
  \label{eqn: ttrr for Q-n}
  \lefteqn{(p_n\lambda+q_n)\mathcal{Q}_{n+1}(\lambda)}\nonumber\\
  &&=
  (r_n\lambda^2+s_n\lambda+t_n)
  \mathcal{Q}_n(\lambda)+(u_n\lambda^2+v_n\lambda+w_n)
  \mathcal{Q}_{n-1}(\lambda), \quad n\geq1,
  \end{align}
  with $\mathcal{Q}_1(\lambda)=
  \rho_0(\lambda+\alpha_1\rho_0^{-1}-b_0)$ and $\mathcal{Q}_0(\lambda)=1$.
  \end{theorem}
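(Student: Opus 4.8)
The plan is to regard $\mathcal{P}_n$ and $\mathcal{P}_{n-1}$ as a working basis and to write each of $\mathcal{Q}_{n+1}$, $\mathcal{Q}_n$, and a suitable polynomial multiple of $\mathcal{Q}_{n-1}$ as an $\mathbb{R}[\lambda]$-combination of this pair with coefficients of controlled degree; equating coefficients of $\mathcal{P}_n$ and $\mathcal{P}_{n-1}$ then yields \eqref{eqn: ttrr for Q-n} with explicit constants. First I would substitute the $R_I$ relation \eqref{eqn: ttrr for RI polynomials Pn} into $\mathcal{Q}_{n+1}=\mathcal{P}_{n+1}+\alpha_{n+1}\mathcal{P}_n$ to eliminate $\mathcal{P}_{n+1}$, obtaining
\begin{align*}
\mathcal{Q}_{n+1}=\bigl[\rho_n(\lambda-\beta_n)+\alpha_{n+1}\bigr]\mathcal{P}_n+\tau_n(\lambda-\gamma_n)\mathcal{P}_{n-1},
\end{align*}
so that $\mathcal{Q}_{n+1}$ is already a combination of $\mathcal{P}_n,\mathcal{P}_{n-1}$ with degree-one coefficients, as is $\mathcal{Q}_n=\mathcal{P}_n+\alpha_n\mathcal{P}_{n-1}$.

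The only awkward term is $\mathcal{Q}_{n-1}=\mathcal{P}_{n-1}+\alpha_{n-1}\mathcal{P}_{n-2}$, which reaches back to $\mathcal{P}_{n-2}$. To bring it into the same two-dimensional span I would apply \eqref{eqn: ttrr for RI polynomials Pn} at index $n-1$ in the form $\tau_{n-1}(\lambda-\gamma_{n-1})\mathcal{P}_{n-2}=\mathcal{P}_n-\rho_{n-1}(\lambda-\beta_{n-1})\mathcal{P}_{n-1}$ and multiply $\mathcal{Q}_{n-1}$ by the linear factor $\tau_{n-1}(\lambda-\gamma_{n-1})$, which gives
\begin{align*}
\tau_{n-1}(\lambda-\gamma_{n-1})\mathcal{Q}_{n-1}=\alpha_{n-1}\mathcal{P}_n+\bigl[\tau_{n-1}(\lambda-\gamma_{n-1})-\alpha_{n-1}\rho_{n-1}(\lambda-\beta_{n-1})\bigr]\mathcal{P}_{n-1}.
\end{align*}
Now $\mathcal{Q}_n$ and $\tau_{n-1}(\lambda-\gamma_{n-1})\mathcal{Q}_{n-1}$ express $\mathcal{P}_n,\mathcal{P}_{n-1}$ through a $2\times2$ coefficient matrix whose determinant is
\begin{align*}
\Delta_n=\tau_{n-1}(\lambda-\gamma_{n-1})-\alpha_{n-1}\rho_{n-1}(\lambda-\beta_{n-1})-\alpha_n\alpha_{n-1}.
\end{align*}
The decisive structural fact is that $\Delta_n$ has degree at most one in $\lambda$; this is exactly what forces the left-hand coefficient in \eqref{eqn: ttrr for Q-n} to be linear, and I would read off $p_n=\tau_{n-1}-\alpha_{n-1}\rho_{n-1}$ together with the matching constant $q_n$ from it.

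From here I would use Cramer's rule to solve for $\Delta_n\mathcal{P}_n$ and $\Delta_n\mathcal{P}_{n-1}$ as $\mathbb{R}[\lambda]$-combinations of $\mathcal{Q}_n$ and $\tau_{n-1}(\lambda-\gamma_{n-1})\mathcal{Q}_{n-1}$, insert these into the expression for $\mathcal{Q}_{n+1}$, and collect. The left side becomes $\Delta_n\mathcal{Q}_{n+1}=(p_n\lambda+q_n)\mathcal{Q}_{n+1}$. On the right, the coefficient of $\mathcal{Q}_n$ is a product of two linear factors minus a linear term, hence degree two, and supplies $r_n\lambda^2+s_n\lambda+t_n$; the coefficient of $\mathcal{Q}_{n-1}$ carries the factor $\tau_{n-1}(\lambda-\gamma_{n-1})$ times the degree-one expression $\tau_n(\lambda-\gamma_n)-\alpha_n[\rho_n(\lambda-\beta_n)+\alpha_{n+1}]$, hence also degree two, and supplies $u_n\lambda^2+v_n\lambda+w_n$. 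This matches the claimed degree pattern. Finally I would record the initial data: $\mathcal{Q}_0=1$ by definition and $\mathcal{Q}_1=\mathcal{P}_1+\alpha_1\mathcal{P}_0=\rho_0(\lambda-\beta_0)+\alpha_1=\rho_0(\lambda+\alpha_1\rho_0^{-1}-\beta_0)$.

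To keep everything rigorous and avoid dividing by a possibly vanishing $\Delta_n$, I would present the final relation as a polynomial identity: substitute the three reductions above into $\Delta_n\mathcal{Q}_{n+1}-(r_n\lambda^2+s_n\lambda+t_n)\mathcal{Q}_n-(u_n\lambda^2+v_n\lambda+w_n)\mathcal{Q}_{n-1}$ and check that the resulting coefficients of $\mathcal{P}_n$ and of $\mathcal{P}_{n-1}$ each vanish identically. The main obstacle is thus not conceptual but bookkeeping: verifying the two coefficient cancellations and confirming that the elimination of $\mathcal{P}_{n-2}$ introduces exactly one linear factor, so that the degrees come out as $(1,2,2)$ rather than larger. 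The hypotheses $\tau_n\neq0$ and $\alpha_n\neq0$ guarantee that the factor $\tau_{n-1}(\lambda-\gamma_{n-1})$ used in the elimination is a genuine nonzero polynomial and that the reduction is non-degenerate.
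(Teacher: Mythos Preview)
Your proposal is correct and rests on the same underlying idea as the paper: eliminate $\mathcal{P}_{n-2},\mathcal{P}_{n-1},\mathcal{P}_n,\mathcal{P}_{n+1}$ from the five available relations (the three definitions of $\mathcal{Q}_{n-1},\mathcal{Q}_n,\mathcal{Q}_{n+1}$ together with two instances of the $R_I$ recurrence) and read off the coefficients via Cramer's rule. The organizational difference is that the paper packages all five equations into a single $5\times 5$ linear system with coefficient matrix $[\mathcal{C}_{n-1}]$ and solves directly for $\mathcal{Q}_{n-1}$, so that $p_n\lambda+q_n$, $r_n\lambda^2+s_n\lambda+t_n$, and $u_n\lambda^2+v_n\lambda+w_n$ emerge as three $4\times 4$ minors; you instead perform the elimination in stages, first reducing to a $2\times 2$ system in $(\mathcal{P}_n,\mathcal{P}_{n-1})$ whose determinant $\Delta_n$ is visibly linear. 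Your route makes the degree pattern $(1,2,2)$ transparent from the outset and avoids computing large determinants; the paper's route delivers all eight constants at once as explicit minors without intermediate manipulation. (Your $\Delta_n$ is $-(p_n\lambda+q_n)$ in the paper's sign convention, which is immaterial since the theorem only asserts existence.) Either way the algebra is identical, and your care to phrase the final step as a polynomial identity rather than a division by $\Delta_n$ cleanly handles the degenerate case $p_n=q_n=0$, which the paper's determinant formulation also accommodates but does not comment on.
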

\begin{proof}
  For $n\geq1$, consider the following system
  \begin{align}
\begin{split}
  \label{eqn: system of equations Qn and Pn for recurrence relations}
  \mathcal{Q}_{k}(\lambda)
&=\mathcal{P}_{k}(\lambda)+\alpha_{k}\mathcal{P}_{k-1}(\lambda),
  \quad k=n-1,n,n+1,\\
  \mathcal{P}_{k}(\lambda)
&=\rho_{k-1}(\lambda-\beta_{k-1})\mathcal{P}_{k-1}(\lambda)+
  \tau_{k-1}(\lambda-\gamma_{k-1})\mathcal{P}_{k-2}(\lambda),
  \quad k=n,n+1,
\end{split}
  \end{align}
  written as
  \begin{align*}
  [\mathcal{C}_{n-1}]
  \left(
    \begin{array}{c}
      \mathcal{Q}_{n-1}(\lambda) \\
      \mathcal{P}_{n-2}(\lambda) \\
      \mathcal{P}_{n-1}(\lambda) \\
      \mathcal{P}_{n}(\lambda) \\
      \mathcal{P}_{n+1}(\lambda) \\
    \end{array}
  \right)
  =
  \left(
    \begin{array}{c}
      \mathcal{Q}_{n+1}(\lambda) \\
      \mathcal{P}_{n}(\lambda) \\
      0 \\
      0 \\
      0 \\
    \end{array}
  \right),
  \end{align*}
 where
 \begin{align*}
 [\mathcal{C}_{n-1}]=
\left(
    \begin{array}{ccccc}
      0 & 0 & 0 & \alpha_{n-1} & 1 \\
      0 & 0 & \alpha_n & 1 & 0 \\
      -1 & \alpha_{n-1} & 1 & 0 & 0 \\
      0 & 0 & \tau_n(\lambda-\gamma_n) & \rho_n(\lambda-\beta_n) & -1 \\
      0 & \tau_{n-1}(\lambda-\gamma_{n-1}) &
      \rho_{n-1}(\lambda-\beta_{n-1}) & -1 & 0 \\
    \end{array}
  \right) \end{align*}
is the coefficient matrix.
Using Cramer's rule,
the first unknown variable $\mathcal{Q}_{n-1}(\lambda)$
is given by
\begin{align*}
   \det[\mathcal{C}_{n-1}]\mathcal{Q}_{n-1}(\lambda)=
  \det[\mathcal{A}_{n+1}]\mathcal{Q}_{n+1}(\lambda)-
  \det[\mathcal{B}_{n}]\mathcal{Q}_{n}(\lambda),
\end{align*}
  where
  \begin{align*}
 [\mathcal{A}_{n+1}]=
 \left(
    \begin{array}{cccc}
      0            & \alpha_n & 1 & 0 \\
      \alpha_{n-1} & 1     & 0 & 0  \\
      0            & \tau_n(\lambda-\gamma_n)
      & \rho_n(\lambda-\beta_n) & -1 \\
      \tau_{n-1}(\lambda-\gamma_{n-1})
      & \rho_{n-1}(\lambda-\beta_{n-1}) & -1 & 0 \\
      \end{array}
  \right),
 \end{align*}
 \begin{align*}
 [\mathcal{B}_{n}]=
 \left(
    \begin{array}{cccc}
      0            & 0 & \alpha_{n+1} & 1 \\
      \alpha_{n-1} & 1     & 0 & 0  \\
      0            & \tau_n(\lambda-\gamma_n)
      & \rho_n(\lambda-\beta_n) & -1 \\
      \tau_{n-1}(\lambda-\gamma_{n-1})
      & \rho_{n-1}(\lambda-\beta_{n-1}) & -1 & 0 \\
      \end{array}
  \right).
 \end{align*}
A straightforward computation of the determinants gives
 $ \det[\mathcal{A}_{n+1}] = p_n\lambda+q_n$,
 $\det[\mathcal{B}_{n}]  = r_n\lambda^2+s_n\lambda+t_n$
 and
 $\det[\mathcal{C}_{n-1}] = u_n\lambda^2+v_n\lambda+w_n$,
where for $n\geq1$,
\begin{align}
\label{eqn: ttrr constants in terms of RI parameters}
  p_n&=\alpha_{n-1}\rho_{n-1}-\tau_{n-1},\,\,
  q_n=\alpha_{n-1}(\alpha_n-\rho_{n-1}\beta_{n-1})+
  \tau_{n-1}\gamma_{n-1},
 \nonumber\\
  r_n&=\rho_np_n,\,\,
  s_n=\rho_nq_n+\alpha_n^{-1}p_nq_{n+1}-\alpha_{n-1}p_{n+1},
  \nonumber\\
  t_n&=-\alpha_{n-1}\rho_{n-1}\beta_{n-1}(\alpha_{n+1}-\rho_n\beta_n)+
           \gamma_{n-1}(\alpha_{n-1}\alpha_{n+1}-\tau_{n-1}\beta_n\rho_n)
           -\alpha_{n-1}\tau_n\gamma_n,
           \nonumber\\
  u_n&=\tau_{n-1}p_{n+1},\,\,
  v_n=\tau_{n-1}(q_{n+1}-\gamma_{n-1}p_{n+1}),\,\,
 w_n=-\tau_{n-1}\gamma_{n-1}q_{n+1},\nonumber
  \end{align}
  are the constants given explicitly in terms of
  the recurrence parameters used in
\eqref{eqn: ttrr for RI polynomials Pn} and
$\{\alpha_n\}_{n=0}^{\infty}$.
Further, the values of $\mathcal{Q}_k(\lambda)$, $k=0,1$
are obtained from
\eqref{eqn: system of equations Qn and Pn for recurrence relations}
for $n=0,1$ and
hence the recurrence relation
\eqref{eqn: ttrr for Q-n} is well-defined.
\end{proof}
An immediate consequence is the following.
\begin{corollary}
\label{coro: corollary for choice of alpha-n for R-I polynomials}
The polynomials
$\mathcal{Q}_n(\lambda)=
\mathcal{P}_n(\lambda)+\alpha_n\mathcal{P}_{n-1}(\lambda)$,
$n\geq0$,
form a sequence of $R_I$ polynomials if
$\alpha_n=\rho_{n-1}\beta_{n-1}$ and
$\gamma_n=0$, $n\geq0$ or
$\alpha_n=\tau_n\rho_n^{-1}$, $n\geq0$.
\end{corollary}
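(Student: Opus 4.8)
The plan is to read the corollary off directly from the explicit coefficient formulas \eqref{eqn: ttrr constants in terms of RI parameters} established in the proof of Theorem \ref{thm: ttrr for Q-n}. The recurrence \eqref{eqn: ttrr for Q-n} is of $R_I$ type exactly when the coefficient $(p_n\lambda+q_n)$ of $\mathcal{Q}_{n+1}(\lambda)$ collapses to a nonzero constant while the coefficients of $\mathcal{Q}_n(\lambda)$ and $\mathcal{Q}_{n-1}(\lambda)$ collapse to polynomials of degree at most one. Accordingly, for each of the two sets of hypotheses I would substitute into \eqref{eqn: ttrr constants in terms of RI parameters} and check that precisely the right coefficients vanish so that \eqref{eqn: ttrr for Q-n} takes the shape \eqref{eqn: ttrr for RI polynomials Pn}.

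For the choice $\alpha_n=\tau_n\rho_n^{-1}$, substitution into $p_n=\alpha_{n-1}\rho_{n-1}-\tau_{n-1}$ gives $p_n=0$ for every $n$. Because $r_n=\rho_np_n$ and $u_n=\tau_{n-1}p_{n+1}$, both of these vanish as well, so the two quadratic coefficients in \eqref{eqn: ttrr for Q-n} drop to linear ones and the coefficient of $\mathcal{Q}_{n+1}(\lambda)$ becomes the constant $q_n$. Dividing throughout by $q_n$ then rewrites \eqref{eqn: ttrr for Q-n} in the form \eqref{eqn: ttrr for RI polynomials Pn}. Here I would also record that $q_n\neq0$ and that the resulting leading coefficients $\hat{\rho}_n,\hat{\tau}_n$ are nonzero, so that the relation is genuinely of $R_I$ type.

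The choice $\alpha_n=\rho_{n-1}\beta_{n-1}$ together with $\gamma_n=0$ is treated by the same substitution but relies on a different cancellation. Now $\alpha_n-\rho_{n-1}\beta_{n-1}=0$ and $\gamma_{n-1}=0$ force $q_n=0$, while $\alpha_{n+1}-\rho_n\beta_n=0$ (since $\alpha_{n+1}=\rho_n\beta_n$) together with $\gamma_{n-1}=\gamma_n=0$ force $t_n=0$ and $w_n=0$. Consequently \eqref{eqn: ttrr for Q-n} reduces to
\[
p_n\lambda\,\mathcal{Q}_{n+1}(\lambda)=\lambda(r_n\lambda+s_n)\mathcal{Q}_n(\lambda)+\lambda(u_n\lambda+v_n)\mathcal{Q}_{n-1}(\lambda).
\]
Since both sides share the factor $\lambda$ (a direct computation with $\gamma_n=0$ also gives $\mathcal{Q}_{n+1}(\lambda)=\lambda(\rho_n\mathcal{P}_n(\lambda)+\tau_n\mathcal{P}_{n-1}(\lambda))$, exhibiting the common zero at the origin), this common factor may be cancelled in the polynomial identity; dividing by $p_n$ then produces an $R_I$ recurrence with linear coefficients.

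I expect the step requiring the most care to be this last cancellation. Unlike the first case, the leading coefficient $p_n\lambda+q_n$ does not reduce to a constant on its own, as $p_n\neq0$ in general; the passage to $R_I$ form hinges instead on the simultaneous vanishing $q_n=t_n=w_n=0$, which clears the factor $\lambda$ from both sides. I would therefore verify these three vanishings carefully against \eqref{eqn: ttrr constants in terms of RI parameters} and confirm $p_n\neq0$, ensuring that after normalisation the parameters $\hat{\rho}_n,\hat{\tau}_n$ are well defined and nonzero, exactly as the definition of an $R_I$ recurrence demands.
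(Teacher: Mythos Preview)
Your approach is correct and essentially identical to the paper's: both substitute the two choices of $\alpha_n$ into the explicit formulas \eqref{eqn: ttrr constants in terms of RI parameters} and observe which coefficients vanish, reducing \eqref{eqn: ttrr for Q-n} to $R_I$ form. You are in fact slightly more careful than the paper in the second case: the paper only records $t_n=w_n=0$ before dividing by $p_n$, whereas you correctly note that $q_n=0$ is also needed (and holds) so that the common factor $\lambda$ can be cancelled from both sides.
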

\begin{proof}
Choosing $\alpha_n=\tau_n\rho_n^{-1}$, $n\geq0$, makes
 $p_n$ and hence $r_n$ and $u_n$ equal to zero for $n\geq1$.
Similarly, with $\alpha_n=\rho_{n-1}\beta_{n-1}$ and $\gamma_n=0$
we have $t_n=w_n=0$ for $n\geq1$.
Thus \eqref{eqn: ttrr for Q-n} reduces to the recurrence relations
\begin{align*}
\mathcal{Q}_{n+1}(\lambda)
&=
q_n^{-1}(s_n\lambda+t_n)\mathcal{Q}_{n}(\lambda)+
q_n^{-1}(v_n\lambda+w_n)\mathcal{Q}_{n-1}(\lambda),
\quad n\geq1,\\
\mathcal{Q}_{n+1}(\lambda)&=
p_n^{-1}(r_n\lambda+s_n)\mathcal{Q}_{n}(\lambda)+
p_n^{-1}(u_n\lambda+v_n)\mathcal{Q}_{n-1}(\lambda),
\quad n\geq1,
\end{align*}
of $R_I$ type respectively.
\end{proof}
It is clear that there is an obvious way to choose $\alpha_n$, $n\geq0$,
if we require such linear combinations to be $R_I$ polynomials \textit{ab initio}.
We will use this choice in Section
\ref{sec: para-orthogonality relations} when we
obtain a para-orthogonal polynomial from $\mathcal{Q}_n(\lambda)$.
However, in the present section, we suppose
$p_{n+1}\neq 0$, $q_n\neq 0$,
$\gamma_n=\gamma\in\mathbb{C}$, $n\geq1$.
In such a case, the polynomials $\mathcal{P}_n(\lambda+\gamma)$, $n\geq1$,
satisfy the recurrence relation
\begin{align*}
\mathcal{P}_{n+1}(\lambda)=\rho_n(\lambda-\beta_n)\mathcal{P}_{n}(\lambda)+
\tau_n\lambda\mathcal{P}_{n-1}(\lambda),
\quad n\geq1.
\end{align*}
Then, from Theorem $\ref{thm: ttrr for Q-n}$,
the linear combination
\begin{align*}
\mathcal{Q}_{n}(\lambda)=\mathcal{P}_n(\lambda+\gamma)+
a_n\mathcal{P}_{n-1}(\lambda+\gamma),
\quad n\geq1,
\end{align*}
 satisfies the recurrence relation \eqref{eqn: ttrr for Q-n}
but with the much simplified constants
\begin{equation}
\label{eqn: much simplified constants}
\begin{split}
p_n
&=
\alpha_{n-1}\rho_{n-1}-\tau_{n-1},
q_n=\alpha_{n-1}(\alpha_n-\rho_{n-1}\beta_{n-1}),
r_n=\rho_np_n,\\
s_n
&=
\alpha_n^{-1}p_nq_{n+1}+\alpha_{n-1}(\tau_n-\rho_{n-1}\beta_{n-1}\rho_n),
t_n=-\alpha_n^{-1}\alpha_{n-1}\rho_{n-1}\beta_{n-1}q_{n+1},\\
u_n
&=
\tau_{n-1}p_{n+1},
v_n=\tau_{n-1}q_{n+1},
w_n=0.
\end{split}
\end{equation}
We use these simplified constants to convert \eqref{eqn: ttrr for Q-n}
into a form that is appropriate for further discussion.
\begin{theorem}
\label{thm: mixed recurrence relations}
  Suppose the sequence $\{\alpha_n\}_{n=1}^{\infty}$ is constructed
recursively as
  \begin{align}
  \alpha_n=-(\rho_{n-1}-\alpha_{n-1}^{-1}\tau_{n-1})+\rho_{n-1}\beta_{n-1},
  \quad n\geq2,
  \label{eqn: condition for mixed recurrence relation}
  \end{align}
 where $\alpha_1\neq\rho_0\beta_0$ is arbitrary.
 If $\alpha_0=\tau_0\rho_0^{-1}$, then
 $\{\mathcal{Q}_n(\lambda)\}_{n=1}^{\infty}$
 satisfies the mixed recurrence relations
\begin{subequations}
 \begin{align}
 \label{eqn: mixed ttrr for Q-2}
 \mathcal{Q}_2(\lambda)
&=
\frac{s_1}{q_1}\left(\lambda+\frac{t_1}{s_1}\right)
 \mathcal{Q}_1(\lambda)-
 \frac{\tau_0q_2}{q_1}\lambda(\lambda-1)\mathcal{Q}_0(\lambda),
 \quad\mbox{and}\\
\mathcal{Q}_{n+1}(\lambda)
&=
\rho_n\left(\lambda-\frac{t_n}{r_n}\right)
\mathcal{Q}_{n}(\lambda)+
\frac{\tau_{n-1}q_{n+1}}{q_n}\lambda\mathcal{Q}_{n-1}(\lambda),
\quad n\geq2,
\label{eqn: mixed ttrr for for Q-n}
\end{align}
\end{subequations}
with $\mathcal{Q}_1(\lambda)=
  \rho_0(\lambda+\alpha_1\rho_0^{-1}-\beta_0)$ and
$\mathcal{Q}_0(\lambda)=1$
\end{theorem}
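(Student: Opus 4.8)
The plan is to read Theorem \ref{thm: mixed recurrence relations} as a statement purely about the coefficients in the recurrence \eqref{eqn: ttrr for Q-n}: once the two hypotheses on $\{\alpha_n\}$ are translated into relations among $p_n,q_n,\dots,w_n$ given by \eqref{eqn: much simplified constants}, both \eqref{eqn: mixed ttrr for Q-2} and \eqref{eqn: mixed ttrr for for Q-n} should drop out by factoring and cancelling a common linear factor. First I would record the two key translations. On the one hand, $\alpha_0=\tau_0\rho_0^{-1}$ forces $p_1=\alpha_0\rho_0-\tau_0=0$, and hence also $r_1=\rho_1 p_1=0$. On the other hand, a one-line rearrangement shows that the recursion \eqref{eqn: condition for mixed recurrence relation} is exactly equivalent to the identity $q_n=-p_n$ for $n\ge 2$; indeed, substituting $p_n$ and $q_n$ from \eqref{eqn: much simplified constants} into $q_n=-p_n$ and solving for $\alpha_n$ returns precisely \eqref{eqn: condition for mixed recurrence relation}. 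In particular $q_{n+1}=-p_{n+1}$ holds for all $n\ge1$.

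With these in hand, the case $n=1$ is immediate. Since $p_1=r_1=0$, the left coefficient $p_1\lambda+q_1$ of \eqref{eqn: ttrr for Q-n} reduces to the constant $q_1=\alpha_0(\alpha_1-\rho_0\beta_0)$, which is nonzero because $\alpha_1\neq\rho_0\beta_0$. The $\mathcal{Q}_1$ coefficient becomes $s_1\lambda+t_1$, and using $q_2=-p_2$ the $\mathcal{Q}_0$ coefficient $u_1\lambda^2+v_1\lambda+w_1=\tau_0 p_2\lambda^2+\tau_0 q_2\lambda$ factors as $-\tau_0q_2\lambda(\lambda-1)$. Dividing through by $q_1$ then yields \eqref{eqn: mixed ttrr for Q-2}.

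For $n\ge2$ I would use $q_n=-p_n$ and $q_{n+1}=-p_{n+1}$ to simplify all three coefficients of \eqref{eqn: ttrr for Q-n}. The left coefficient becomes $p_n(\lambda-1)$ and, since $w_n=0$, the $\mathcal{Q}_{n-1}$ coefficient becomes $\tau_{n-1}p_{n+1}\lambda(\lambda-1)$. The decisive step is to show that the $\mathcal{Q}_n$ coefficient $r_n\lambda^2+s_n\lambda+t_n$ carries the same factor $(\lambda-1)$, i.e.\ that $r_n+s_n+t_n=0$. This is precisely where the recursion does its work: substituting $q_{n+1}=-p_{n+1}=-(\alpha_n\rho_n-\tau_n)$ together with the rearranged form $p_n-\alpha_{n-1}\rho_{n-1}\beta_{n-1}=-\alpha_{n-1}\alpha_n$ of \eqref{eqn: condition for mixed recurrence relation}, the sum $r_n+s_n+t_n$ collapses to $\tau_n\bigl(\alpha_n^{-1}(p_n-\alpha_{n-1}\rho_{n-1}\beta_{n-1})+\alpha_{n-1}\bigr)=\tau_n\cdot0=0$. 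Hence $r_n\lambda^2+s_n\lambda+t_n=r_n(\lambda-1)(\lambda-t_n/r_n)$, and cancelling the common factor $p_n(\lambda-1)$ (legitimate by the standing assumption $q_n\neq0$, whence $p_n=-q_n\neq0$), together with $r_n=\rho_n p_n$ and $q_{n+1}/q_n=p_{n+1}/p_n$, delivers \eqref{eqn: mixed ttrr for for Q-n}.

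The main obstacle is the vanishing $r_n+s_n+t_n=0$: the entire recursive definition of $\{\alpha_n\}$ is reverse-engineered so that $\lambda=1$ is a root of this quadratic while the $\mathcal{Q}_{n-1}$ and left-hand coefficients simultaneously acquire the matching factor $(\lambda-1)$. I would resist the temptation to shortcut this by evaluating \eqref{eqn: ttrr for Q-n} at $\lambda=1$, since that only produces $(r_n+s_n+t_n)\mathcal{Q}_n(1)=0$ and the non-vanishing of $\mathcal{Q}_n(1)$ is not available at this stage; the clean route is the direct algebraic identity above, which is a finite and routine computation once $q_n=-p_n$ has been established.
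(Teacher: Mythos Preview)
Your proof is correct and follows essentially the same route as the paper: translate the hypotheses into $p_1=0$ and $q_n=-p_n$ for $n\ge2$, then exhibit the common factor $(\lambda-1)$ in all three coefficients of \eqref{eqn: ttrr for Q-n} and cancel. Your direct verification that $r_n+s_n+t_n=0$ via the rearrangement $p_n-\alpha_{n-1}\rho_{n-1}\beta_{n-1}=-\alpha_{n-1}\alpha_n$ is in fact cleaner than the paper's somewhat opaque displayed identity, and your closing remark about why one cannot shortcut by evaluating at $\lambda=1$ is a correct and worthwhile observation.
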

\begin{proof}
 It immediately follows from
 \eqref{eqn: much simplified constants} and
\eqref{eqn: condition for mixed recurrence relation}
 that, for $n\geq 2$,
\begin{align*}
q_n&=-p_n, \quad r_n=\rho_np_n, \quad s_n=-\alpha_n^{-1}p_np_{n+1}-\alpha_{n-1}p_{n+1}-\rho_np_n\\
t_n&=\alpha_n^{-1}\alpha_{n-1}\rho_{n-1}\beta_{n-1}p_{n+1},
\quad u_n=-v_n=\tau_{n-1}p_{n-1},
\quad w_n=0.
\end{align*}
Then, for $n\geq2$, the recurrence relation \eqref{eqn: ttrr for Q-n} takes the form
\begin{align}
\label{eqn: ttrr for Q_n+1 after simplified constants in thm}
(\lambda-1)\mathcal{Q}_{n+1}(\lambda)=
p_n^{-1}[r_n\lambda^2+s_n\lambda+t_n]\mathcal{Q}_{n}(\lambda)
+
p_n^{-1}u_n\lambda(\lambda-1)\mathcal{Q}_{n-1}(\lambda).
\end{align}
Further, $p_n+q_n=0$ implies
\begin{align*}
\frac{p_{n+1}}{\alpha_n}+\frac{\alpha_{n-1}p_{n+1}}{\alpha_n}
\left(1-\frac{\rho_{n-1}\beta_{n-1}}{\alpha_n}\right)=0,
\quad n\geq2,
\end{align*}
which means that $\lambda-1$ is a factor of the polynomial
coefficient of $\mathcal{Q}_{n}(\lambda)$ in
\eqref{eqn: ttrr for Q_n+1 after simplified constants in thm}.
Hence, cancelling out the factor ($\lambda-1$),
\eqref{eqn: ttrr for Q_n+1 after simplified constants in thm}
can now be written as
\begin{align*}
\mathcal{Q}_{n+1}(\lambda)&=
\rho_n\left(\lambda-\frac{\alpha_{n-1}\rho_{n-1}\beta_{n-1}p_{n+1}}
{\alpha_n\rho_n p_n}\right)
\mathcal{Q}_{n}(\lambda)+
\frac{\tau_{n-1}p_{n+1}}{p_n}\lambda\mathcal{Q}_{n-1}(\lambda),
\end{align*}
which is
\eqref{eqn: mixed ttrr for for Q-n} for $n\geq2$.

Now with the conditions $\alpha_0=\tau_0\rho_0^{-1}$ and
$\alpha_1\neq \rho_0\beta_0$, we observe that
$p_1=0$ and $q_1\neq0$. Further
\begin{align*}
r_1=0,\,\,
s_1=\rho_1q_1-\alpha_0p_2,\,\, t_1=a_0\rho_0b_0p_2\alpha_1^{-1},\,\,
u_1=-v_1=\tau_0p_2,\,\, w_1=0.
\end{align*}
Thus, \eqref{eqn: ttrr for Q-n} takes the form
\begin{align*}
q_1\mathcal{Q}_2(\lambda)=(s_1\lambda+t_1)
\mathcal{Q}_1(\lambda)+
\tau_0p_2\lambda(\lambda-1)\mathcal{Q}_0(\lambda),
\end{align*}
which is
\eqref{eqn: mixed ttrr for Q-2}.
\end{proof}
\begin{remark}
Similar to
\eqref{eqn: condition for mixed recurrence relation},
one can also construct $\{\alpha_n\}_{n=0}^{\infty}$
with $q_n=p_n$, $n\geq2$.
In this case also, as in
\eqref{eqn: ttrr for Q_n+1 after simplified constants in thm},
$\lambda+1$ becomes a factor of both the coefficients of
$\mathcal{Q}_n(\lambda)$ and $\mathcal{Q}_{n-1}(\lambda)$
which by a simple computation leads to
\eqref{eqn: mixed ttrr for Q-2} and \eqref{eqn: mixed ttrr for for Q-n} only.
\end{remark}
\begin{theorem}
\label{thm: theorem for Qn and Qn-1 not having common zero}
Suppose $\mathcal{Q}_1(1)=0$.
Then $\mathcal{Q}_n(\lambda)$ has a common zero at
$\lambda=1$ for $n\geq2$, with
$\mathcal{Q}_2(\lambda)$ having a double zero at $\lambda=1$
if $\alpha_1\neq\rho_0b_0$ is a root of the quadratic equation
\begin{align}
\label{eqn: condition for double zero of Q2 at 1}
\rho_1x^2-\rho_0\beta_0\tau_1=0.
\end{align}
However, if $\mathcal{Q}_2(\lambda)$ does not have a double zero at
$\lambda=1$, then, $\mathcal{Q}_n(\lambda)$ and
$\mathcal{Q}_{n-1}(\lambda)$, $n\geq2$,
do not have a common zero except at $\lambda=1$.
\end{theorem}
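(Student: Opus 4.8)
The plan is to establish the three assertions separately, in each case extracting the needed vanishing directly from the mixed recurrences \eqref{eqn: mixed ttrr for Q-2} and \eqref{eqn: mixed ttrr for for Q-n} of Theorem \ref{thm: mixed recurrence relations}. For the common zero at $\lambda=1$ I would induct on $n$. Putting $\lambda=1$ in \eqref{eqn: mixed ttrr for Q-2} annihilates the right-hand side, since the first term carries $\mathcal{Q}_1(1)=0$ while the second carries the explicit factor $\lambda(\lambda-1)$; hence $\mathcal{Q}_2(1)=0$. With the two base values $\mathcal{Q}_1(1)=\mathcal{Q}_2(1)=0$ secured, evaluating \eqref{eqn: mixed ttrr for for Q-n} at $\lambda=1$ propagates the vanishing, the first term through $\mathcal{Q}_n(1)$ and the second through $\mathcal{Q}_{n-1}(1)$, so $\mathcal{Q}_{n+1}(1)=0$ for all $n\geq2$.

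For the double-zero criterion, since $\mathcal{Q}_2(1)=0$ is already known, a double zero at $\lambda=1$ is equivalent to $\mathcal{Q}_2'(1)=0$. I would differentiate \eqref{eqn: mixed ttrr for Q-2} in the cleared form $q_1\mathcal{Q}_2=(s_1\lambda+t_1)\mathcal{Q}_1-\tau_0q_2\,\lambda(\lambda-1)$, set $\lambda=1$, and use $\mathcal{Q}_1(1)=0$ together with $\mathcal{Q}_1'(1)=\rho_0$ read off from $\mathcal{Q}_1(\lambda)=\rho_0(\lambda+\alpha_1\rho_0^{-1}-\beta_0)$. This collapses $\mathcal{Q}_2'(1)=0$ to the scalar identity $(s_1+t_1)\rho_0=\tau_0q_2$. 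Substituting $\alpha_0=\tau_0\rho_0^{-1}$, $q_2=-p_2=\tau_1-\alpha_1\rho_1$, and the values of $s_1,t_1$ recorded in the proof of Theorem \ref{thm: mixed recurrence relations}, the factors $\tau_0$ and $\rho_0^{-1}$ cancel and the identity reduces exactly to \eqref{eqn: condition for double zero of Q2 at 1} with $x=\alpha_1$; the excluded value $\alpha_1=\rho_0\beta_0$ is the degenerate one that would make $q_1=0$.

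For the last assertion I would run a descent along \eqref{eqn: mixed ttrr for for Q-n}. Suppose $\lambda_0\neq1$ were a common zero of $\mathcal{Q}_n$ and $\mathcal{Q}_{n-1}$. The delicate feature is the explicit factor $\lambda$ multiplying $\mathcal{Q}_{n-1}$, which singles out the value $\lambda_0=0$, so I would eliminate that value first. Setting $\lambda=0$ in \eqref{eqn: mixed ttrr for for Q-n} kills the $\mathcal{Q}_{n-1}$ term and leaves the first-order relation $\mathcal{Q}_{n+1}(0)=-(t_n/p_n)\mathcal{Q}_n(0)$; since $\mathcal{Q}_1(0)=-\rho_0\neq0$ and $\mathcal{Q}_2(0)=-t_1\rho_0/q_1\neq0$ (the latter because the standing hypotheses $p_{n+1}\neq0$, $q_n\neq0$ and $\beta_0\neq0$ force $t_1\neq0$), this recursion yields $\mathcal{Q}_n(0)\neq0$ for every $n$, ruling out $\lambda_0=0$. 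With $\lambda_0\neq0,1$, each use of \eqref{eqn: mixed ttrr for for Q-n} at $\lambda_0$ forces $\mathcal{Q}_{n-2}(\lambda_0)=0$, stepping the common zero downward until the pair $\mathcal{Q}_3,\mathcal{Q}_2$ yields $\mathcal{Q}_1(\lambda_0)=0$ (the case $n=2$ being immediate, as $\mathcal{Q}_1$ already vanishes only at $\lambda=1$). Since $\mathcal{Q}_1$ vanishes only at $\lambda=1$, this contradicts $\lambda_0\neq1$.

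The step I expect to be the main obstacle is not the descent itself but the bookkeeping underneath it: one must verify that every multiplier used to step down, in particular $t_n/p_n$ and $\tau_{n-1}q_{n+1}/q_n$, is nonzero under the standing assumptions $p_{n+1}\neq0$, $q_n\neq0$, so that the vanishing of the $\mathcal{Q}_{n-1}$ term really transfers to $\mathcal{Q}_{n-2}$, and that $t_1\neq0$ so that the exceptional value $\lambda=0$ is genuinely excluded. The hypothesis that $\mathcal{Q}_2$ has no double zero at $\lambda=1$ is what keeps the zero at $\lambda=1$ simple, pinning it as the single meeting point of consecutive $\mathcal{Q}_n$; the complementary case, in which $\alpha_1$ solves \eqref{eqn: condition for double zero of Q2 at 1}, is precisely the degenerate configuration set aside here and the one that would have to be avoided when later removing the factor $(\lambda-1)$ to build the para-orthogonal polynomial.
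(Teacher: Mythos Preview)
Your argument is correct and, for the first two assertions, coincides with the paper's proof: the paper also inducts along \eqref{eqn: mixed ttrr for Q-2}--\eqref{eqn: mixed ttrr for for Q-n} to propagate the zero at $\lambda=1$, and likewise differentiates the $n=1$ relation, evaluates at $\lambda=1$, and reduces $(s_1+t_1)\rho_0+\tau_0p_2=0$ (equivalently your $(s_1+t_1)\rho_0=\tau_0q_2$, since $q_2=-p_2$) to $\rho_1\alpha_1^2-\rho_0\beta_0\tau_1=0$.

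For the third assertion the paper takes a slightly more compressed route: it records $\mathcal{Q}_1(0)=\alpha_1-\rho_0\beta_0\neq0$ and $\mathcal{Q}_{n+1}(0)\neq0$ from $t_n/q_n\neq0$, and then invokes \cite[Lemma~2.1]{Silva-Ranga-bounds-complex-zeros-JAT-2005} applied to the $R_I$-type recurrence \eqref{eqn: mixed ttrr for for Q-n}. Your explicit descent is precisely the content of that lemma specialized to the present recurrence, so the two approaches are the same in substance; yours has the advantage of being self-contained and of making transparent exactly which nonvanishing conditions ($\tau_{n-1}\neq0$, $q_n\neq0$, $\lambda_0\neq0$) drive each step, while the paper's citation keeps the exposition shorter. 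Your observation that the ``no double zero'' hypothesis on $\mathcal{Q}_2$ is not actually consumed by the descent is accurate; in the paper it functions as a standing assumption for the later sections rather than as an ingredient of this particular implication.
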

\begin{proof}
If $\mathcal{Q}_1(\lambda)$ has a zero at $\lambda=1$, it follows from the recurrence relations
\eqref{eqn: mixed ttrr for Q-2} and \eqref{eqn: mixed ttrr for for Q-n} that $\lambda=1$ is a root of
$\mathcal{Q}_n(\lambda)$, $n\geq1$. To check whether $\mathcal{Q}_2(\lambda)$
has a double zero at $\lambda=1$, we note from \eqref{eqn: mixed ttrr for Q-2} that
\begin{align*}
q_1\mathcal{Q}'_2(\lambda)=
s_1\mathcal{Q}_1(\lambda)+(s_1\lambda+t_1)\mathcal{Q}'_1(\lambda)
+2\lambda\tau_0p_2-\tau_0p_2.
\end{align*}
Since $q_1\neq0$, it follows that $\mathcal{Q}_2(\lambda)$
has a double zero at $\lambda=1$ if
$(s_1+t_1)\rho_0+\tau_0p_2=0$, which, upon substitution of the values of
$s_1$, $t_1$ and $p_2$
from Theorem \ref{thm: mixed recurrence relations},
is equivalent to the condition
$\rho_1\alpha_1^2-\rho_0\beta_0\tau_1=0$.

Further, $\mathcal{Q}_1(0)=\alpha_1-\rho_0\beta_0\neq0$ and
$\mathcal{Q}_{n+1}(0)=t_nq_n^{-1}\neq0$, $n\geq1$.
Hence, the last part of the theorem follows from
\cite[Lemma 2.1]{Silva-Ranga-bounds-complex-zeros-JAT-2005}
applied to the recurrence relation
\eqref{eqn: mixed ttrr for for Q-n}.
\end{proof}
The uniqueness of the sequence $\{\alpha_n\}_{n=0}^{\infty}$
hence follows
from the observation $\mathcal{Q}_1(1)=0$ implies choosing
$\alpha_1=\rho_0(\beta_0-1)$.
In case we require that $\mathcal{Q}_2(\lambda)$ does not have a
repeated root at $\lambda=1$, we need to choose the initial
values such that
$\tau_1\rho_1^{-1}\neq \rho_0\beta_0^{-1}(1-\beta_0)^{2}$,
$\beta_0\neq 0,\pm1$.
In the rest of the paper, we will assume that such a choice for
$\rho_0$, $\beta_0$, $\rho_1$ and $\tau_1$
has been made to satisfy
these inequalities.

\section{Biorthogonality relations from linear combinations}
\label{sec: orthogonality relations from linear combinations}
In this section, we consider the sequence
$\{\mathcal{Q}_n(\lambda)\}_{n=1}^{\infty}$ constructed from
\eqref{eqn: condition for mixed recurrence relation}
with $\alpha_1=\rho_0(\beta_0-1)$ so that
$\mathcal{Q}_n(1)=0$ for $n\geq1$.
We further assume that the other zeros of $\mathcal{Q}_n(\lambda)$,
$n\geq1$, are distinct, that is $\mathcal{Q}_n(\lambda)$ has no repeated zeros.
We first find an eigenvalue representation
which will help in the study of a biorthogonality relation satisfied by
$\mathcal{Q}_n(\lambda)$. For this, we first note that
from the recurrence relations
\eqref{eqn: mixed ttrr for Q-2} and \eqref{eqn: mixed ttrr for for Q-n},
the leading coefficient of $\mathcal{Q}_n(\lambda)$ is
$\kappa_{n-1}=\rho_{n-1}\rho_{n-2}\cdots\rho_0\neq0$, $n\geq1$,
a fact that can also be verified from
\eqref{eqn: Q-n as linear combination in theorem}.

Further $\mathcal{Q}_n(\lambda)$, $n\geq1$, has the determinant representation
\begin{align}
\label{eqn: Q_n as a determinant}
\left|
\begin{array}{ccccc}
 \rho_0(\lambda-1) & \tau_0q_2q_1^{-1}(\lambda-1) & 0  & \cdots & 0 \\
 \lambda & q_1^{-1}(s_1\lambda+t_1) & -\tau_1q_3q_2^{-1} & \cdots & 0 \\
 0 & \lambda & \rho_2(\lambda-t_2r_2^{-1})  & \cdots & 0 \\
 0 & 0 & \lambda  & \cdots & 0 \\
 \vdots & \vdots & \vdots  & \ddots & \vdots \\
 0 & 0 & 0 & \cdots & \rho_{n-1}(\lambda-t_{n-1}r_{n-1}^{-1}) \\
 \end{array}
\right|.
\end{align}
\begin{theorem}
The zeros of the monic polynomials
$\mathcal{\hat{Q}}_n(\lambda)=\kappa_{n-1}^{-1}\mathcal{Q}_n(\lambda)$
are the eigenvalues of the matrix $\mathcal{D}_n$ given by
\begin{align*}
\left|
   \begin{array}{cccccc}
     \dfrac{s_1q_1^{-1}}{\rho_1}
& \dfrac{\tau_0\tau_1q_2q_1^{-1}}{\alpha_1\rho_1\rho_0}
& \dfrac{-\tau_0\tau_1q_3q_1^{-1}}{\rho_1\rho_0}
& 0
& \cdots
& 0 \\
     \dfrac{-1}{\rho_1}
& \dfrac{-\vartheta_2}{\alpha_1\rho_1}
& \dfrac{\tau_1q_3q_2^{-1}}{\rho_1}
& 0
& \cdots
& 0 \\
   \dfrac{1}{\rho_2\rho_1}
& \dfrac{\vartheta_2}{\alpha_1\rho_2\rho_1}
& \dfrac{-\vartheta_3}{\rho_2\rho_1}
& -\dfrac{\tau_2q_3^{-1}q_4}{\rho_2}
& \cdots
& 0 \\
     \dfrac{-1}{\rho_3\rho_2\rho_1}
& \dfrac{-\vartheta_2}{\alpha_1\rho_3\rho_2\rho_1}
& \dfrac{\vartheta_3}{\rho_3\rho_2\rho_1}
& \dfrac{-\vartheta_4}{\rho_3\rho_2}
& \cdots
& 0 \\
     \vdots & \vdots & \vdots  &\vdots&\ddots & \vdots \\
\dfrac{(-1)^{n+1}}{\rho_{n-1}\cdots\rho_1}
& \dfrac{(-1)^{n+1}\vartheta_2}{\alpha_1\rho_{n-1}\cdots\rho_1}
&\dfrac{(-1)^{n+2}\vartheta_3}{\rho_{n-1}\cdots\rho_1}
& \dfrac{(-1)^{n+3}\vartheta_4}{\rho_{n-1}\cdots\rho_2}
& \cdots
& \dfrac{-\vartheta_n}{\rho_{n-1}\rho_{n-2}} \\
   \end{array}
 \right|
\end{align*}
where $\vartheta_2=q_2$ and $\vartheta_n=
q_n(1+\rho_{n-2}\alpha_{n-1}^{-1})$,
$n\geq3$.
\end{theorem}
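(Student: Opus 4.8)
The plan is to recognise that the determinant representation \eqref{eqn: Q_n as a determinant} is the determinant of a \emph{linear matrix pencil} in $\lambda$, so that the zeros of $\mathcal{Q}_n$ become the eigenvalues of a single constant matrix. Every entry of the matrix in \eqref{eqn: Q_n as a determinant}, call it $M_n(\lambda)$, is affine in $\lambda$: the diagonal entries $\rho_0(\lambda-1)$, $q_1^{-1}(s_1\lambda+t_1)$, $\rho_k(\lambda-t_kr_k^{-1})$ and the subdiagonal entries $\lambda$ are of degree one, the superdiagonal entry $\tau_0q_2q_1^{-1}(\lambda-1)$ is of degree one, and the remaining superdiagonal entries $-\tau_{k-1}q_{k+1}q_k^{-1}$ are constant. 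Hence I would write
\[
M_n(\lambda)=\lambda E_n-F_n,
\]
where $E_n$ collects the coefficients of $\lambda$ and $F_n$ the negated constant terms. Explicitly, $E_n$ is lower bidiagonal with diagonal $(\rho_0,s_1q_1^{-1},\rho_2,\dots,\rho_{n-1})$, unit subdiagonal, and the single extra entry $(E_n)_{12}=\tau_0q_2q_1^{-1}$, while $F_n$ is upper bidiagonal. Then $\mathcal{Q}_n(\lambda)=\det M_n(\lambda)=\det(\lambda E_n-F_n)$, so the zeros of $\mathcal{Q}_n$ are exactly the values of $\lambda$ at which the pencil is singular.

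The next step is to show that $E_n$ is invertible and in fact $\det E_n=\kappa_{n-1}$. Expanding $\det E_n$ along its first row and using the leading-coefficient relation that already made $\kappa_{n-1}$ the leading coefficient of $\mathcal{Q}_n$ — concretely $\rho_0s_1-\tau_0q_2=\rho_0\rho_1q_1$, which follows from $s_1=\rho_1q_1-\alpha_0p_2$, $q_2=-p_2$ and $\alpha_0\rho_0=\tau_0$ established in Theorem \ref{thm: mixed recurrence relations} and its proof — the determinant telescopes to $\rho_0\rho_1\cdots\rho_{n-1}=\kappa_{n-1}\neq0$. Consequently
\[
\mathcal{Q}_n(\lambda)=\kappa_{n-1}\,\det\!\bigl(\lambda I-E_n^{-1}F_n\bigr),
\]
and dividing by $\kappa_{n-1}$ gives $\widehat{\mathcal{Q}}_n(\lambda)=\det(\lambda I-E_n^{-1}F_n)$. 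Thus $\widehat{\mathcal{Q}}_n$ is precisely the characteristic polynomial of the constant matrix $E_n^{-1}F_n$, and its zeros are exactly the eigenvalues of that matrix. It therefore remains only to identify $E_n^{-1}F_n$ with the displayed matrix $\mathcal{D}_n$.

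To make this identification I would avoid inverting $E_n$ outright and instead verify the equivalent relation $E_n\mathcal{D}_n=F_n$ entrywise. Because $E_n$ is almost bidiagonal, row $k$ ($k\ge3$) of this identity reads $(\mathcal{D}_n)_{k-1,j}+\rho_{k-1}(\mathcal{D}_n)_{kj}=(F_n)_{kj}$, with the first two rows treated separately owing to the entry $(E_n)_{12}$ and the factor $\alpha_1^{-1}$. These two-term relations are exactly what produce, column by column, the alternating signs $(-1)^{k+1}$ and the telescoping denominators $\rho_{k-1}\rho_{k-2}\cdots$ appearing in $\mathcal{D}_n$, while the combinations near the diagonal collapse to $\vartheta_2=q_2$ and $\vartheta_n=q_n(1+\rho_{n-2}\alpha_{n-1}^{-1})$ after substituting the simplified constants \eqref{eqn: much simplified constants} and \eqref{eqn: condition for mixed recurrence relation}.

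I expect the computational heart — and the main obstacle — to be this last entrywise verification: tracking the signs and the accumulating $\rho$-products generated by the bidiagonal inverse, handling the special roles of the first two columns (which inherit the $\tau_0$, $\rho_0$ and $\alpha_1$ factors from $(E_n)_{12}$ and from $\mathcal{Q}_1$), and confirming that the various linear combinations of $s_1,t_1,p_k,q_k,\tau_k,\rho_k$ telescope to the compact quantities $\vartheta_j$. By contrast the conceptual steps — writing $M_n$ as a pencil, proving $\det E_n=\kappa_{n-1}$, and reading off the characteristic polynomial — are short, and once $\widehat{\mathcal{Q}}_n(\lambda)=\det(\lambda I-\mathcal{D}_n)$ is in hand the eigenvalue statement is immediate, with no need to invoke the distinctness of the remaining zeros.
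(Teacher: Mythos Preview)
Your proposal is correct and follows essentially the same route as the paper: the paper also splits the determinant \eqref{eqn: Q_n as a determinant} as $\det(\lambda\mathcal{G}_n-\mathcal{H}_n)$ (your $E_n,F_n$), observes $\det\mathcal{G}_n=\kappa_{n-1}$, and identifies $\mathcal{D}_n=\mathcal{G}_n^{-1}\mathcal{H}_n$. The only cosmetic difference is that the paper writes down $\mathcal{G}_n^{-1}$ explicitly and then multiplies, whereas you propose the equivalent check $E_n\mathcal{D}_n=F_n$.
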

\begin{proof}
From \eqref{eqn: Q_n as a determinant},
$\mathcal{Q}_n(\lambda)=\lambda\mathcal{G}_n-\mathcal{H}_n$, where
\begin{align*}
\mathcal{G}_n=
\left|
  \begin{array}{ccccc}
    \rho_0 & \tau_0q_2q_1^{-1} & 0 & \cdots& 0 \\
    1 & s_1q_1^{-1} & 0 & \cdots & 0\\
    0 & 1 & \rho_2 & \cdots & 0\\
    \vdots & \vdots & \vdots & \ddots & \vdots\\
    0 & 0 & 0 & \cdots & \rho_{n-1}\\
  \end{array}
\right|
\qquad\mbox{and}
\end{align*}
\begin{align*}
\mathcal{H}_n=
\left|
  \begin{array}{ccccc}
    \rho_0 & \tau_0q_2q_1^{-1} & 0 & \cdots& 0 \\
    0 & -t_1q_1^{-1} & \tau_1q_3q_2^{-1} & \cdots & 0\\
    0 & 0 & -t_2q_2^{-1} & \cdots & 0\\
    \vdots & \vdots & \vdots & \ddots & \vdots\\
    0 & 0 & 0 & \cdots & -t_{n-1}q_{n-1}^{-1}\\
  \end{array}
\right|.
\end{align*}
Then, for $n\geq1$
\begin{align*}
\mathcal{Q}_n(\lambda)
=\det(\lambda\mathcal{G}_n-\mathcal{H}_n)
=\det(\mathcal{G}_n)\cdot\det(\lambda I-\mathcal{G}_n^{-1}\mathcal{H}_n).
\end{align*}
However $\det(\mathcal{G}_n)=\kappa_{n-1}\neq0$
implies that $\mathcal{G}_n$ is invertible and hence
$\mathcal{\hat{Q}}_n(\lambda)$
is the characteristic polynomial of the matrix product
$\mathcal{D}_n=\mathcal{G}_n^{-1}\mathcal{H}_n$.
The theorem now follows from the fact that the matrix inverse
$\mathcal{G}_n^{-1}$ is given by
\begin{align*}
\left|
  \begin{array}{ccccc}
    \dfrac{s_1q_1^{-1}}{\rho_1\rho_0} & -\dfrac{\tau_0q_2q_1^{-1}}{\rho_1\rho_0} & 0 & \cdots & 0 \\
    -\dfrac{1}{\rho_1\rho_0} & \dfrac{1}{\rho_1} & 0 & \cdots & 0 \\
    \dfrac{1}{\rho_2\rho_1\rho_0} & -\dfrac{1}{\rho_2\rho_1} & \dfrac{1}{\rho_2} & \cdots & 0 \\
    \vdots & \vdots & \vdots & \ddots & \vdots \\
    \dfrac{(-1)^{n+1}}{\rho_{n-1}\cdots\rho_0} & \dfrac{(-1)^{n+2}}{\rho_{n-1}\cdots\rho_1}
     & \dfrac{(-1)^{n+3}}{\rho_{n-1}\cdots\rho_2}  & \cdots & \dfrac{1}{\rho_{n-1}}  \\
  \end{array}
\right|
\end{align*}
and the proof is complete.
\end{proof}
Thus, with the assumptions made at the beginning of this section,
the eigenvalues of the matrix $\mathcal{D}_n$
are distinct.
Further, the intersection of the spectrum of $\mathcal{D}_n$ and
$\mathcal{D}_{n+1}$, $n\geq1$, is non-empty and
consists only of the point $\lambda=1$.

With $\sigma_0^{L}(\lambda):=\sigma_0^{R}(\lambda)
:=\mathcal{Q}_0(\lambda)=1$,
consider the following (Laurent) polynomials
\begin{align}
\label{eqn: definition of laurent polynomials for biorthogonality}
\sigma_k^{L}(\lambda):=(-\lambda)^{-k}\mathcal{Q}_k(\lambda),
\qquad
\sigma_k^{R}(\lambda):=-(\lambda-1)^{-1}\frac{\mathcal{Q}_k(\lambda)}
{\prod_{j=0}^{k-1}\tau_jq_{k+1}q_1^{-1}},
\end{align}
for $k=1,2,\cdots,n$.
Then, the mixed recurrence relations
\eqref{eqn: mixed ttrr for Q-2} and \eqref{eqn: mixed ttrr for for Q-n}
written in terms of $\sigma_{n}^{L}(\lambda)$
yield
\begin{align*}
\lambda[\rho_0\sigma_0^{L}(\lambda)+\sigma_1^{L}(\lambda)]
&=
\rho_0\sigma_0^{L}(\lambda)\\
\lambda[\tau_0q_2q_1^{-1}\sigma_0^{L}(\lambda)+
s_1q_1^{-1}\sigma_1^{L}(\lambda)+\sigma_2^{L}(\lambda)]
&=
\tau_0q_2q_1^{-1}\sigma_0^{L}(\lambda)-t_1q_1^{-1}\sigma_1^{L}(\lambda)\\
\lambda[\rho_n\sigma_n^{L}(\lambda)+\sigma_{n+1}^{L}(\lambda)]
&=
\tau_{n-1}q_{n+1}q_n^{-1}\sigma_{n-1}^{L}(\lambda)-t_nq_n^{-1}
\sigma_n^{L}(\lambda).
\end{align*}
Defining the sequence $\{\chi_n^{L}(\lambda)\}_{n=0}^{\infty}$
where
\begin{align}
\label{eqn: definition of chi-n-L}
\begin{split}
\chi_0^{L}(\lambda)
&=\rho_0\sigma_0^{L}(\lambda)+\sigma_1^{L}(\lambda),
\quad
\chi_1^{L}(\lambda)=\tau_0q_2q_1^{-1}\sigma_0^{L}(\lambda)+
s_1q_1^{-1}\sigma_1^{L}(\lambda)+\sigma_2^{L}(\lambda),\\
\chi_{n}^{L}(\lambda)
&=
\rho_{n}\sigma_{n}^{L}(\lambda)+\sigma_{n+1}^{L}(\lambda),
\quad
n\geq2,
\end{split}
\end{align}
we have
\begin{align}
\label{eqn: definition of chi-n-L with lambda}
\begin{split}
\lambda\chi_0^{L}(\lambda)
&=\rho_0\sigma_0^{L}(\lambda),
\quad
\lambda\chi_1^{L}(\lambda)=
\tau_0q_2q_1^{-1}\sigma_0^{L}(\lambda)-
t_1q_1^{-1}\sigma_1^{L}(\lambda)\\
\lambda\chi_{n}^{L}(\lambda)
&=
\tau_{n-1}q_{n+1}q_{n}^{-1}\sigma_{n-1}^{L}(\lambda)-
t_nq_{n}^{-1}\sigma_{n}^{L}(\lambda),
\quad n\geq2.
\end{split}
\end{align}
Similarly, the mixed recurrence relations
\eqref{eqn: mixed ttrr for Q-2} and
\eqref{eqn: mixed ttrr for for Q-n}
written in terms of $\sigma_{n}^{R}(\lambda)$
yield
\begin{align*}
\lambda[\rho_0\sigma_0^{R}(\lambda)+\tau_0q_2q_1^{-1}\sigma_1^{R}(\lambda)]
&=
\tau_0q_2q_1^{-1}\sigma_1^{R}(\lambda)+\rho_0\sigma_0^{R}(\lambda),\\
\lambda[\sigma_0^{R}(\lambda)+t_1q_1^{-1}\sigma_1^{R}(\lambda)]
&=
\tau_1q_3q_2^{-1}\sigma_2^{R}(\lambda)-t_1q_1^{-1}\sigma_1^{R}(\lambda),\\
\lambda[\sigma_{n-1}^{R}(\lambda)+\rho_n\sigma_{n}^{R}(\lambda)]
&=
\tau_nq_{n+2}q_{n+1}^{-1}\sigma_{n+1}^{R}(\lambda)-t_{n}q_{n}^{-1}
\sigma_{n}^{R}(\lambda).
\end{align*}
Defining the sequence $\{\chi_n^{R}(\lambda)\}_{n=0}^{\infty}$ where
\begin{align}
\label{eqn: definition chi-n-R}
\begin{split}
\chi_0^{R}(\lambda)&=\rho_0\sigma_0^{R}(\lambda)+
\tau_0q_2q_1^{-1}\sigma_1^{R}(\lambda),
\quad
\chi_1^{R}(\lambda)=\sigma_0^{R}(\lambda)+t_1q_1^{-1}\sigma_1^{R}(\lambda),\\
\chi_{n}^{R}(\lambda)
&=
\sigma_{n-1}^{R}(\lambda)+\rho_{n}\sigma_{n}^{R}(\lambda),
n\geq2.
\end{split}
\end{align}
we have
\begin{align}
\label{eqn: definition of chi-n-R with lambda}
\begin{split}
\lambda\chi_0^{R}(\lambda)
&=
\tau_0q_2q_1^{-1}\sigma_1^{R}(\lambda)+\rho_0\sigma_0^{R}(\lambda),
\quad
\lambda\chi_1^{R}(\lambda)
=
\tau_1q_3q_2^{-1}\sigma_2^{R}(\lambda)-
t_1q_1^{-1}\sigma_1^{R}(\lambda),\\
\lambda\chi_{n}^{R}(\lambda)
&=
\tau_nq_{n+2}q_{n+1}^{-1}\sigma_{n+1}^{R}(\lambda)-
t_nq_n^{-1}\sigma_n^{R}(\lambda),
\quad n\geq2.
\end{split}
\end{align}
The matrix equations corresponding to
\eqref{eqn: definition of chi-n-L with lambda} and
\eqref{eqn: definition of chi-n-R with lambda}
 are respectively
\begin{subequations}
\begin{align}
\label{eqn: matrix equation for sigma-L}
\lambda\vec{\sigma}^{L}(\lambda)\mathcal{G}_n
&=
\vec{\sigma}^{L}(\lambda)\mathcal{H}_n+
\lambda\sigma_n^{L}(\lambda)\vec{e}_n^{T}
\quad\mbox{and}\\
\lambda\mathcal{G}_n\vec{\sigma}^{R}(\lambda)
&=\mathcal{H}_n\vec{\sigma}^{R}(\lambda)+
\tau_{n-1}q_{n+1}q_n^{-1}\sigma_n^{R}(\lambda)\vec{e}_n,
\label{eqn: matrix equation for sigma-R}
\end{align}
\end{subequations}
where $\vec{e}_n$ is the $n^{th}$ column of the $n\times n$
identity matrix $I_n$ and
\begin{align*}
\vec{\sigma}^{L}(\lambda)
=
[\sigma_0^{L}(\lambda)\,\,\sigma_1^{L}(\lambda)\cdots\sigma_{n-1}^{L}(\lambda)]
\quad\mbox{and}\quad
\vec{\sigma}^{R}(\lambda)
=
[\sigma_0^{R}(\lambda)\,\,\sigma_1^{R}(\lambda)
\cdots\sigma_{n-1}^{R}(\lambda)]^{T}.
\end{align*}
\begin{lemma}
\label{lemma: lemma for biorthogonality of finite sequences}
Let the zeros of $\mathcal{Q}_n(\lambda)$ be denoted as
$\lambda_{n,j}$, $j=1,2,\cdots,n$, with $\lambda_{n,n}=1$.
Then, the following biorthogonality relation
\begin{align}
\label{eqn: biorthogonality relation in theorem}
\sum_{i=0}^{n-1}\sigma_i^{R}(\lambda_{n,j})
\tilde{\chi}_i^{L}(\lambda_{n,k})\mu_{n,j,k}
=\delta_{j,k},
\quad
j,k=0,1,\cdots,n-2,
\end{align}
holds, where
$\tilde{\chi}_i^{L}(\lambda_{n,k})=\chi_i^{L}(\lambda_{n,k})$,
$i=1,2,\cdots,n-2$ and
$\tilde{\chi}_{n-1}(\lambda)=\rho_{n-1}\sigma_{n-1}^{L}(\lambda)$.
Further, the weight function $\mu_{n,j,k}$ has the expression
\begin{align*}
\mu_{n,j,k}=[\tau_{n-1}q_{n+1}q_n^{-1}[\sigma_{n}^{R}(\lambda_{n,j})]^{'}
\sigma_{n-1}^{L}(\lambda_{n,k})]^{-1},
\quad
j,k=1,\cdots,n-1.
\end{align*}
\end{lemma}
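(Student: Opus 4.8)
The plan is to read the sum in \eqref{eqn: biorthogonality relation in theorem} as a single bilinear form in the matrices $\mathcal{G}_n$, $\mathcal{H}_n$, and then extract both the orthogonality ($j\neq k$) and the normalisation ($j=k$) from one Christoffel--Darboux-type identity. First I would record the bookkeeping observation that the row vector $\vec{\sigma}^{L}(\lambda)\mathcal{G}_n$ has precisely the entries $\tilde{\chi}_i^{L}(\lambda)$: comparing the columns of $\mathcal{G}_n$ against the definitions \eqref{eqn: definition of chi-n-L} shows that its first $n-1$ components are $\chi_0^{L},\dots,\chi_{n-2}^{L}$ while its last is $\rho_{n-1}\sigma_{n-1}^{L}$, matching the tilde convention in the statement. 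Consequently $\sum_{i=0}^{n-1}\sigma_i^{R}(\lambda_{n,j})\tilde{\chi}_i^{L}(\lambda_{n,k})$ is exactly the scalar $\vec{\sigma}^{L}(\lambda_{n,k})\mathcal{G}_n\vec{\sigma}^{R}(\lambda_{n,j})$, so it suffices to evaluate this quantity at pairs of zeros of $\mathcal{Q}_n$.

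The central computation is to produce the Christoffel--Darboux identity. I would multiply \eqref{eqn: matrix equation for sigma-L} on the right by $\vec{\sigma}^{R}(\xi)$ and \eqref{eqn: matrix equation for sigma-R} (written at the spectral value $\xi$) on the left by $\vec{\sigma}^{L}(\lambda)$; both products share the common middle term $\vec{\sigma}^{L}(\lambda)\mathcal{H}_n\vec{\sigma}^{R}(\xi)$, which cancels upon subtraction. Using $\vec{e}_n^{T}\vec{\sigma}^{R}(\xi)=\sigma_{n-1}^{R}(\xi)$ and $\vec{\sigma}^{L}(\lambda)\vec{e}_n=\sigma_{n-1}^{L}(\lambda)$, this yields
\begin{align*}
(\lambda-\xi)\,\vec{\sigma}^{L}(\lambda)\mathcal{G}_n\vec{\sigma}^{R}(\xi)
=\lambda\sigma_n^{L}(\lambda)\sigma_{n-1}^{R}(\xi)
-\tau_{n-1}q_{n+1}q_n^{-1}\sigma_n^{R}(\xi)\sigma_{n-1}^{L}(\lambda).
\end{align*}
The key structural input is that, by \eqref{eqn: definition of laurent polynomials for biorthogonality}, both $\sigma_n^{L}$ and $\sigma_n^{R}$ are scalar multiples of $\mathcal{Q}_n$, hence vanish at every non-unit zero $\lambda_{n,j}$; the distinctness and non-vanishing of these zeros are guaranteed by the standing assumption of this section together with Theorem \ref{thm: theorem for Qn and Qn-1 not having common zero}.

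For the off-diagonal case I would set $\lambda=\lambda_{n,j}$ and $\xi=\lambda_{n,k}$ with $j\neq k$: the right-hand side of the identity vanishes because $\sigma_n^{L}(\lambda_{n,j})=\sigma_n^{R}(\lambda_{n,k})=0$, while $\lambda_{n,j}-\lambda_{n,k}\neq 0$, forcing $\vec{\sigma}^{L}(\lambda_{n,k})\mathcal{G}_n\vec{\sigma}^{R}(\lambda_{n,j})=0$. Conceptually this is just the orthogonality of the left eigenvector $\vec{\sigma}^{L}(\lambda_{n,k})\mathcal{G}_n$ and the right eigenvector $\vec{\sigma}^{R}(\lambda_{n,j})$ of $\mathcal{D}_n=\mathcal{G}_n^{-1}\mathcal{H}_n$ belonging to distinct eigenvalues, which the identity makes quantitative.

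The diagonal case $j=k$ is where the real work lies and is the step I expect to be the main obstacle, since one must pass to a confluent limit and keep track of the exact normalisation. I would fix $\lambda=\lambda_{n,j}$ and let $\xi\to\lambda_{n,j}$: the first term on the right vanishes identically (as $\sigma_n^{L}(\lambda_{n,j})=0$), and dividing the surviving term by $\lambda_{n,j}-\xi$ before letting $\xi\to\lambda_{n,j}$ produces the derivative $[\sigma_n^{R}(\lambda_{n,j})]'$, since $\sigma_n^{R}(\lambda_{n,j})=0$. The limit gives $\vec{\sigma}^{L}(\lambda_{n,j})\mathcal{G}_n\vec{\sigma}^{R}(\lambda_{n,j})=\tau_{n-1}q_{n+1}q_n^{-1}[\sigma_n^{R}(\lambda_{n,j})]'\sigma_{n-1}^{L}(\lambda_{n,j})$, which is exactly $\mu_{n,j,j}^{-1}$; multiplying through by $\mu_{n,j,k}$ then collapses both cases into \eqref{eqn: biorthogonality relation in theorem}. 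I would finally note that the common zero $\lambda=1$ must be excluded from the index range $0,\dots,n-2$, since $\sigma_k^{R}$ carries the factor $(\lambda-1)^{-1}$ and the confluent argument tacitly uses $\lambda_{n,j}\neq 1$.
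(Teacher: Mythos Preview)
Your proposal is correct and follows essentially the same route as the paper: derive the Christoffel--Darboux identity from the two matrix equations \eqref{eqn: matrix equation for sigma-L}--\eqref{eqn: matrix equation for sigma-R}, then evaluate it at pairs of (non-unit) zeros of $\mathcal{Q}_n$ to obtain the orthogonality and, via the confluent limit, the normalisation. Your presentation is in fact slightly more direct than the paper's: you identify at the outset that the row vector $\vec{\sigma}^{L}(\lambda)\mathcal{G}_n$ has entries $\tilde{\chi}_i^{L}(\lambda)$, so the bilinear form $\vec{\sigma}^{L}(\lambda_{n,k})\mathcal{G}_n\vec{\sigma}^{R}(\lambda_{n,j})$ \emph{is} the sum in \eqref{eqn: biorthogonality relation in theorem}; the paper instead recasts this same observation at the end through auxiliary rectangular matrices $\mathcal{A}$, $\mathcal{B}$ and the transpose identity $\mathcal{A}^{T}\mathcal{B}^{T}=I_{n-1}$, which amounts to the same relabeling. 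One small slip: in your off-diagonal step you substitute $\lambda=\lambda_{n,j}$, $\xi=\lambda_{n,k}$ but then conclude the vanishing of $\vec{\sigma}^{L}(\lambda_{n,k})\mathcal{G}_n\vec{\sigma}^{R}(\lambda_{n,j})$ (indices swapped); this is harmless since $j$ and $k$ range over the same set, but you should keep the roles consistent.
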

\begin{proof}
Post-multiplying \eqref{eqn: matrix equation for sigma-R}
by $\vec{\sigma}^{L}(\lambda)$
and pre-multiplying \eqref{eqn: matrix equation for sigma-L}
by $\vec{\sigma}^{R}(\omega)$
after evaluating \eqref{eqn: matrix equation for sigma-R}
at $\omega$, we obtain the systems
\begin{subequations}
\begin{align}
\label{eqn: matrix equation for sigma-L after evaluation at omega}
\lambda\vec{\sigma}^{R}(\omega)\vec{\sigma}^{L}(\lambda)\mathcal{G}_n
&=
\vec{\sigma}^{R}(\omega)\vec{\sigma}^{L}(\lambda)\mathcal{H}_n+
\lambda\vec{\sigma}^{R}(\omega)\sigma_n^{L}(\lambda)\vec{e}_n^{T}\\
\omega\mathcal{G}_n\vec{\sigma}^{R}(\omega)\vec{\sigma}^{L}(\lambda)
&=\mathcal{H}_n\vec{\sigma}^{R}(\omega)\vec{\sigma}^{L}(\lambda)+
\tau_{n-1}q_{n+1}q_n^{-1}\sigma_n^{R}(\omega)\vec{e}_n\vec{\sigma}^{L}(\lambda).
\label{eqn: matrix equation for sigma-R after evaluation at omega}
\end{align}
\end{subequations}
It can be verified that the trace of
$\vec{\sigma}^{R}(\omega)\vec{\sigma}^{L}(\lambda)\mathcal{G}_n$
which is also equal to the trace of
$\mathcal{G}_n\vec{\sigma}^{R}(\omega)\vec{\sigma}^{L}(\lambda)$
is same as the matrix product
$\vec{\sigma}^{L}(\lambda)\mathcal{G}_n\vec{\sigma}^R(\omega)$.
Further, using the linearity property of the trace
and by subtracting the trace of
\eqref{eqn: matrix equation for sigma-R after evaluation at omega}
from the trace of
\eqref{eqn: matrix equation for sigma-L after evaluation at omega},
we obtain
\begin{align}
\label{eqn: Christoffel formula from matrix equations}
\vec{\sigma}^{L}(\lambda)\mathcal{G}_n\vec{\sigma}^{R}(\omega)=
\frac{\lambda\sigma_{n-1}^{R}(\omega)\sigma_{n}^{L}(\lambda)-
\tau_{n-1}q_{n+1}q_n^{-1}\sigma_{n}^{R}(\omega)\sigma_{n-1}^{L}(\lambda)}
{\lambda-\omega}.
\end{align}
Further, excluding the point $\lambda_{n,n}=\omega_{n,n}=1$ and using
$\omega\rightarrow\lambda$ in
\eqref{eqn: Christoffel formula from matrix equations}
we have
\begin{align*}
\vec{\sigma}^{L}(\lambda)\mathcal{G}_n\vec{\sigma}^{R}(\lambda)=
\tau_{n-1}q_{n+1}q_n^{-1}[\sigma_{n}^{R}(\lambda)]^{'}
\sigma_{n-1}^{L}(\lambda)-
\lambda[\sigma_{n-1}^{R}(\lambda)]^{'}\sigma_{n}^{L}(\lambda).
\end{align*}
Hence at the zeros $\lambda_{n,j}$, $j=1,2,\cdots,n-1,$
\begin{align*}
\vec{\sigma}^{L}(\lambda_{n,j})\mathcal{G}_n\vec{\sigma}^{R}(\lambda_{n,j})=
\tau_{n-1}q_{n+1}q_n^{-1}[\sigma_{n}^{R}(\lambda_{n,j})]^{'}
\sigma_{n-1}^{L}(\lambda_{n,j}).
\end{align*}
Define
$\mu_{n,j}^{-1}:=\tau_{n-1}q_{n+1}q_n^{-1}[\sigma_{n}^{R}(\lambda_{n,j})]^{'}
\sigma_{n-1}^{L}(\lambda_{n,j})$.
Since the zeros are assumed to be distinct
and Theorem
\ref{thm: theorem for Qn and Qn-1 not having common zero}
gives
$\mathcal{Q}_{n-1}(\lambda_{n,j})\neq0$,
we have
$\mu_{n,j}^{-1}\neq0$, $j=1,\cdots,n-1$.
Hence from \eqref{eqn: Christoffel formula from matrix equations},
for $j,k=1,2,\cdots,n-1,$
\begin{align}
\label{eqn: biorthogonality relation from Gn matrix and sigma L-R}
\left[\mathcal{G}_n^{T}[\vec{\sigma}^{L}(\lambda_{n,j})]^T\right]^{T}
\vec{\sigma}^{R}(\lambda_{n,k})=
[\vec{\sigma}^{R}(\lambda_{n,k})]^{T}
\mathcal{G}_n^{T}[\vec{\sigma}^{L}(\lambda_{n,j})]^{T}
=
\mu_{n,j}^{-1}\delta_{j,k}.
\end{align}
This shows the two finite sequences
$\{\mathcal{G}_n^{T}[\vec{\sigma}^{L}(\lambda_{n,j})]^T\}_{j=1}^{n-1}$ and
$\{\vec{\sigma}^{R}(\lambda_{n,k})\}_{k=1}^{n-1}$ are biorthogonal to each other.

To proceed further in the proof, we define
$\mu_{n,j,k}^{-1}:=\tau_{n-1}q_{n+1}q_n^{-1}[\sigma_{n}^{R}(\lambda_{n,j})]^{'}
\sigma_{n-1}^{L}(\lambda_{n,k})$, where
we note that for $k=j$,
$\mu_{n,j,j}^{-1}=\mu_{n,j}^{-1}$.
We further define two non-singular matrices
$\mathcal{A}_{n\times n-1}=(a_{m,j})$ and
$\mathcal{B}_{n-1\times n}=(b_{j,m})$
where for $j=1,2,\cdots, n-1$ and $m=1,2,\cdots, n$
\begin{align*}
a_{m,j}=\frac{\sigma_{m-1}^{R}(\lambda_{n,j})}
{\tau_{n-1}q_{n+1}q_n^{-1}[\sigma_n^{R}(\lambda_{n,j})]^{'}},
\quad
b_{j,m}=\frac{\tilde{\chi}_{m-1}(\lambda_{n,j})}{\sigma_{n-1}^{L}(\lambda_{n,j})}.
\end{align*}
It follows from
\eqref{eqn: biorthogonality relation from Gn matrix and sigma L-R}
that the matrix relation
$\mathcal{B}_{n-1\times n}\cdot\mathcal{A}_{n\times n-1}=I_{n-1}$ holds.
The system of equations resulting from
$\mathcal{A}^{T}_{n-1\times n}\cdot\mathcal{B}^{T}_{n-1\times n}=I_{n-1}$
can be written as
\begin{align*}
\sum_{i=0}^{n-1}\sigma_i^{R}(\lambda_{n,j})\tilde{\chi}_i^{L}(\lambda_{n,k})\mu_{n,j,k}
=\delta_{j,k},
\quad
j,k=1,\cdots,n-1,
\end{align*}
thus proving that the two finite sequences
$\{\sigma_i^{R}(\lambda)\}_{i=0}^{n-2}$
and
$\{\tilde{\chi}_k^{L}(\lambda)\}_{k=0}^{n-2}$
are biorthogonal to each other
on the point set $\lambda=\lambda_{n,1},\lambda_{n,2},\cdots,\lambda_{n,n-1}$,
which is the set of zeros of $\mathcal{Q}_n(\lambda)$ excluding the point $\lambda=1$.
\end{proof}

If $\bigwedge$ denotes the space of Laurent
polynomials,
define a linear functional
$\mathcal{N}$ on $\bigwedge\times\bigwedge$
as
\begin{align*}
\mathcal{N}_{n-1}^{(k,j)}[h_i(\lambda)\,g_i(\lambda)]=
\sum_{i=0}^{n-1}h_{i}(\lambda_{n,j})g_{i}(\lambda_{n,k})\mu_{n,j,k}.
\end{align*}
Then, we have the following result.
\begin{theorem}
\label{thm: theorem for orthogonality of Qn}
The sequence $\{\mathcal{Q}_n(\lambda)\}_{n=1}^{\infty}$ satisfy
\begin{align*}
\mathcal{N}_{n-1}^{(k,j)}
\left[\lambda_{n,k}^{-n+m}(\lambda-1)^{-1}\mathcal{Q}_i(\lambda)\right]
=0,
\quad
\mathcal{N}_{n-1}^{(k,k)}\left[\lambda_{n,k}^{-n+m}(\lambda-1)^{-1}\mathcal{Q}_i(\lambda)\right]
\neq0,
\end{align*}
for $k=1,\cdots,n-1$,
and $m=1,2,\cdots,n$.
\end{theorem}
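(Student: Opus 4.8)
The proof will be a direct application of the biorthogonality relation of Lemma \ref{lemma: lemma for biorthogonality of finite sequences}, reorganised so that the two biorthogonal families reappear disguised as the polynomials $\mathcal{Q}_i$ and the monomials $\lambda^{-n+m}$. The plan is first to read off, from the definition \eqref{eqn: definition of laurent polynomials for biorthogonality}, that $(\lambda-1)^{-1}\mathcal{Q}_i(\lambda)$ is, up to the nonzero constant $-\prod_{l=0}^{i-1}\tau_l\,q_{i+1}q_1^{-1}$, exactly the right Laurent polynomial $\sigma_i^{R}(\lambda)$. Thus the factor carrying the index $i$ inside the bracket is precisely the $R$-family that already sits on one side of \eqref{eqn: biorthogonality relation in theorem}, to be evaluated at the zero $\lambda_{n,j}$.

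Next I would show that the monomials $\{\lambda^{-n+m}\}_{m=1}^{n}=\{1,\lambda^{-1},\dots,\lambda^{-(n-1)}\}$ and the left family $\{\tilde{\chi}_i^{L}(\lambda)\}_{i=0}^{n-1}$ span one and the same $n$-dimensional space of Laurent polynomials. Since $\mathcal{Q}_i(\lambda)$ has degree $i$ with leading coefficient $\kappa_{i-1}\neq0$ and, by Theorem \ref{thm: theorem for Qn and Qn-1 not having common zero}, nonvanishing constant term $\mathcal{Q}_i(0)\neq0$, each $\sigma_i^{L}(\lambda)=(-\lambda)^{-i}\mathcal{Q}_i(\lambda)$ occupies exactly the Laurent degrees $0$ down to $-i$ with both extreme coefficients nonzero; the combinations defining the $\chi_i^{L}$ in \eqref{eqn: definition of chi-n-L} together with $\tilde{\chi}_{n-1}=\rho_{n-1}\sigma_{n-1}^{L}$ therefore form a triangular, hence invertible, system. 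This produces an expansion $\lambda^{-n+m}=\sum_{i}c_{m,i}\,\tilde{\chi}_i^{L}(\lambda)$ with an invertible coefficient matrix $(c_{m,i})$, in particular with $c_{m,m}\neq0$.

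I would then feed this expansion into $\mathcal{N}_{n-1}^{(k,j)}$, so that the $\lambda_{n,k}$-slot of the functional is re-expressed in the $\tilde{\chi}_i^{L}$ basis while the $\lambda_{n,j}$-slot carries $\sigma_i^{R}$; with the two indices now matched, the functional coincides with the biorthogonal sum of Lemma \ref{lemma: lemma for biorthogonality of finite sequences} and collapses termwise via \eqref{eqn: biorthogonality relation in theorem}. For $j\neq k$ every contribution vanishes and the first identity follows, whereas for $j=k$ exactly the diagonal survives and the value is a nonzero multiple of $c_{m,m}\mu_{n,k,k}^{-1}$; its non-triviality is guaranteed by Lemma \ref{lemma: lemma for biorthogonality of finite sequences}, where $\mu_{n,k}^{-1}\neq0$ was secured through $\mathcal{Q}_{n-1}(\lambda_{n,k})\neq0$ (Theorem \ref{thm: theorem for Qn and Qn-1 not having common zero}).

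The main obstacle I anticipate is the change-of-basis step, not the biorthogonality itself. Making the transition matrix $(c_{m,i})$ genuinely triangular and invertible requires careful bookkeeping of the two exceptional members of the left family — the low-index correction terms appearing in $\chi_0^{L},\chi_1^{L}$ through \eqref{eqn: definition of chi-n-L} and the amputated top member $\tilde{\chi}_{n-1}=\rho_{n-1}\sigma_{n-1}^{L}$ — together with the systematic use of $\mathcal{Q}_i(0)\neq0$ to pin down the lowest Laurent degree of each $\sigma_i^{L}$. Once this triangularity, and the excision of the common zero $\lambda=1$ through the factor $(\lambda-1)^{-1}$ (which is precisely what makes the restricted point set $\{\lambda_{n,1},\dots,\lambda_{n,n-1}\}$ admissible), are in place, the $\delta_{j,k}$ structure and the nonvanishing of the diagonal follow immediately from Lemma \ref{lemma: lemma for biorthogonality of finite sequences}.
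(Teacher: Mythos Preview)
Your proposal is correct and follows essentially the same route as the paper's own proof: both arguments (i) invoke Lemma~\ref{lemma: lemma for biorthogonality of finite sequences}, (ii) identify $(\lambda-1)^{-1}\mathcal{Q}_i(\lambda)$ with $\sigma_i^{R}(\lambda)$ up to the nonzero constant coming from \eqref{eqn: definition of laurent polynomials for biorthogonality}, and (iii) replace the left family $\{\tilde{\chi}_i^{L}\}$ by the monomial family $\{\lambda^{-n+m}\}_{m=1}^{n}$ via the observation that they span the same Laurent subspace. Your write-up is in fact slightly more explicit than the paper's at step~(iii), since you spell out why the change-of-basis is triangular (using $\mathcal{Q}_i(0)\neq 0$ from Theorem~\ref{thm: theorem for Qn and Qn-1 not having common zero}) and flag the two exceptional members $\chi_0^{L},\chi_1^{L}$ and the truncated $\tilde{\chi}_{n-1}^{L}$, whereas the paper simply asserts the basis property.
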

\begin{proof}
From Lemma~\ref{lemma: lemma for biorthogonality of finite sequences}
we have
\begin{align*}
\mathcal{N}_{n-1}^{(k,j)}[\sigma_i^{R}(\lambda)\tilde{\chi}_i(\lambda)]=\delta_{j,k},
\quad 0\leq i\leq n-1,
\quad 1\leq j,k\leq n-1.
\end{align*}
Using the definitions \eqref{eqn: definition of chi-n-L}
of $\chi_k^{L}(\lambda)$,
it is clear that
\begin{align*}
\{\tilde{\chi}_0^{L}(\lambda_{n,k}),\tilde{\chi}_1^{L}(\lambda_{n,k}),\cdots,
\tilde{\chi}_{n-1}^{L}(\lambda_{n,k})\},
\end{align*}
(where $\tilde{\chi}_j^{L}(\lambda)=\chi_j^{L}(\lambda)$,
$j=0\cdots,n-2$ and
$\tilde{\chi}_{n=1}(\lambda)=
\rho_{n-1}\sigma_{n-1}^{L}(\lambda)$),
forms a basis for the subspace of Laurent polynomials spanned
by $\{1,\lambda_{n,k}^{-1},\lambda_{n,k}^{-2},\cdots,
\lambda_{n,k}^{-n+1}\}$.
Note that each fixed $k$ yields $n-1$ such subspaces.
Further, this implies that
\begin{align*}
\mathcal{N}_{n-1}^{(k,j)}[\lambda_{n,k}^{-n+m}\sigma_i^{R}(\lambda)]=0,
\quad
j,k=0,1,\cdots,n-1,
\quad
m=1,2,\cdots,n,
\end{align*}
while
$\mathcal{N}_{n-1}^{(k,j)}[\lambda_{n,k}^{-n+m}\sigma_i^{R}(\lambda)]\neq0$,
whenever $\sigma_i^{R}(\lambda)$ is evaluated at $\lambda=\lambda_{n,k}$.
The theorem now follows from the fact that
$\sigma_i^{R}(\lambda)=
-[\prod_{j=0}^{i-1}\tau_jq_{i+1}q_{1}^{-1}(\lambda-1)]^{-1}
\mathcal{Q}_i(\lambda)$.
\end{proof}

Biorthogonality of polynomial sequences as well as their
derivatives is well studied. We refer the reader to
\cite{Askey-discussion-Szego-paper-1982,
Askey-problems-on-SF-computations-1985,
Hendriksen-Njastad-biorthogonal-derivative-Rocky-1991,
Temme-biorthogonal-Constapprx-1986}
for examples of such sequences and related discussions.
For example, biorthogonality relations satisfied only up to a finite
number of polynomials in a sequence
have been obtained in
\cite{Silva-Ranga-bounds-complex-zeros-JAT-2005,
Zhedanov-ortho-polygons-JAT-1999}.
The crucial point in such analysis is the use of
the Christoffel-Darboux type formula
\eqref{eqn: Christoffel formula from matrix equations}
which necessitates the
removal of the common zeros of polynomials of consecutive degrees.

\section{Para-orthogonality relations}
\label{sec: para-orthogonality relations}
The orthogonality condition in
Theorem \ref{thm: theorem for orthogonality of Qn}
motivates us to study the sequence of polynomials in which the
common zero $\lambda=1$ has been removed.
Thus, we consider the sequence
$\{\mathcal{R}_n(\lambda)\}_{n=1}^{\infty}$ where
$
\mathcal{R}_n(\lambda)=\kappa_n^{-1}(\lambda-1)^{-1}
\mathcal{Q}_{n+1},
n\geq0.
$
We recall that $\kappa_n$ is the leading coefficient of
$\mathcal{Q}_{n+1}(\lambda)$,
thus making $\mathcal{R}_n(\lambda)$ monic.

For para-orthogonality, we impose conditions on the parameters used in
\eqref{eqn: ttrr for RI polynomials Pn}. First, $-2<\tau_n\rho_n^{-1}<0$,
$n\geq0$.
By a direct computation from \eqref{eqn: mixed ttrr for Q-2},
we obtain $\mathcal{Q}_2(\lambda)=
\kappa_2(\lambda-1)(\lambda+t_1\rho_1^{-1}q_1^{-1})$.
Then, $\mathcal{Q}_2(\lambda)$ has the second zero at
$\lambda=-1$ if $t_1=\rho_1q_1$
which is equivalent to
 $\alpha_1=\tau_1\rho_1^{-1}\beta_0(1+\beta_0)^{-1}$.
 Since $\alpha_1=\rho_0(\beta_0-1)$ this narrows down the
 choice of the initial values to satisfy
 $\tau_1\rho_1^{-1}=\rho_0\beta_0^{-1}(\beta_0^{2}-1)$.
We note that such a choice does not
contradict the inequalities at the end of
Section \ref{sec: Section-2-mixed recurrence relation}
since $\beta_0\neq\pm1$.

Hence from \eqref{eqn: mixed ttrr for for Q-n},
$\mathcal{R}_n(\lambda)$
satisfies the recurrence relation
\begin{align}
\label{eqn: ttrr for Rn from ttrr for Qn after dividing by lambda-1}
\mathcal{R}_{n+1}(\lambda)=
(\lambda+t_{n+1}\rho_{n+1}^{-1}q_{n+1}^{-1})\mathcal{R}_{n}(\lambda)+
\frac{\tau_np_{n+2}}{\rho_nr_{n+1}}\lambda\mathcal{R}_{n-1}(\lambda),
\quad n\geq1,
\end{align}
with the initial conditions $\mathcal{R}_{0}(\lambda)=1$ and
$\mathcal{R}_{1}(\lambda)=\lambda+1$.
\begin{theorem}
Let the sequence $\{\alpha_n\}_{n=1}^{\infty}$ be so constructed
from \eqref{eqn: condition for mixed recurrence relation}
that, the following conditions are also satisfied
\begin{subequations}
\begin{align}
\label{eqn: condition1 on alpha-n for POP}
\alpha_n\rho_n
&=
(2\rho_n+\tau_n)(\rho_{n-1}\alpha_{n-1}-\tau_{n-1})+\tau_n,
\quad\mbox{and}\\
\alpha_{n}\rho_n
&=
-\rho_{n-1}\beta_{n-1}(2\rho_n+\tau_n)\alpha_{n-1},
n\geq2,
\label{eqn: condition2 on alpha-n for POP}
\end{align}
\end{subequations}
where $\alpha_0=\rho_0\tau_0^{-1}$ and
$\alpha_1=\rho_0(\beta_0-1)$.
Then $\{\mathcal{R}_n(\lambda)\}_{n=1}^{\infty}$
is a sequence of para-orthogonal polynomials.
\end{theorem}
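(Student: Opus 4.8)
The theorem asserts that under two recursive conditions on $\{\alpha_n\}$, the monic polynomials $\mathcal{R}_n(\lambda)=\kappa_n^{-1}(\lambda-1)^{-1}\mathcal{Q}_{n+1}(\lambda)$ form a sequence of para-orthogonal polynomials. The key prior input is the three-term recurrence \eqref{eqn: ttrr for Rn from ttrr for Qn after dividing by lambda-1}, which $\mathcal{R}_n$ already satisfies. So the paper has reduced "para-orthogonality" to showing that the coefficients of that recurrence take a specific form. The standard characterization is that a monic sequence is para-orthogonal (zeros on the unit circle, arising as a sequence associated with Szegő/CMV theory) precisely when it satisfies a recurrence whose structure matches the $R_I$/Szegő normalization — concretely, when the constant term of $\mathcal{R}_n$ has modulus one and the recurrence coefficients satisfy the sign/magnitude constraints forced by a positive measure on the circle.

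**The plan.** I would proceed by substitution. First, recall from the setup that $-2<\tau_n\rho_n^{-1}<0$ was imposed, and that the second zero of $\mathcal{Q}_2$ was pinned to $\lambda=-1$ via $t_1=\rho_1q_1$. The recurrence \eqref{eqn: ttrr for Rn from ttrr for Qn after dividing by lambda-1} has linear coefficient $\lambda+t_{n+1}\rho_{n+1}^{-1}q_{n+1}^{-1}$ and the $\lambda\mathcal{R}_{n-1}$ coefficient $\tau_np_{n+2}(\rho_nr_{n+1})^{-1}$. The plan is to substitute the expressions \eqref{eqn: much simplified constants} for $t_n,q_n,r_n,p_n$ (already written in terms of $\rho,\tau,\beta,\alpha$) together with the two hypotheses \eqref{eqn: condition1 on alpha-n for POP} and \eqref{eqn: condition2 on alpha-n for POP}, and verify that these conditions force the recurrence coefficients into exactly the form that makes $\{\mathcal{R}_n\}$ para-orthogonal. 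I expect condition \eqref{eqn: condition2 on alpha-n for POP} (the one involving $\rho_{n-1}\beta_{n-1}$) to control the constant term $t_{n+1}\rho_{n+1}^{-1}q_{n+1}^{-1}$ — forcing $\mathcal{R}_n(0)$ to have unit modulus — while \eqref{eqn: condition1 on alpha-n for POP} controls the quadratic/reflection coefficient $\tau_np_{n+2}(\rho_nr_{n+1})^{-1}$, keeping it in the admissible range so that all zeros lie on $|\lambda|=1$.

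**Key steps, in order.** (i) Rewrite the two imposed conditions as relations among $p_n,q_n,t_n$; since $q_n=-p_n$ for $n\ge2$ already holds by Theorem~\ref{thm: mixed recurrence relations}, condition \eqref{eqn: condition1 on alpha-n for POP} should simplify the factor $2\rho_n+\tau_n$ into the combination appearing in $r_{n+1}=\rho_{n+1}p_{n+1}$. (ii) Compute the constant term $t_{n+1}\rho_{n+1}^{-1}q_{n+1}^{-1}$ using $t_n=\alpha_n^{-1}\alpha_{n-1}\rho_{n-1}\beta_{n-1}p_{n+1}$ from the proof of Theorem~\ref{thm: mixed recurrence relations}, and show via \eqref{eqn: condition2 on alpha-n for POP} that it equals $1$, so that $\mathcal{R}_n(0)=1$ for all $n$ — this is the hallmark of para-orthogonal polynomials (the reflected polynomial coincides up to the unimodular constant). (iii) Show that the resulting recurrence is exactly of the form
\[
\mathcal{R}_{n+1}(\lambda)=(\lambda+\overline{c}_{n+1})\mathcal{R}_n(\lambda)+d_n\lambda\mathcal{R}_{n-1}(\lambda),
\]
with $|c_{n+1}|=1$ and $d_n$ real negative (guaranteed by $-2<\tau_n\rho_n^{-1}<0$), which is the recurrence satisfied by para-orthogonal polynomials obtained from the Szegő recurrence with parameter $1$; invoking \cite{Ranga-szego-polynomials-2010-AMS} or the defining framework of \cite{Jones-Njasad-Thron-Moment-OP-CF-1989-BLMS} then identifies $\{\mathcal{R}_n\}$ as para-orthogonal. (iv) Verify the base cases $\mathcal{R}_0=1$, $\mathcal{R}_1=\lambda+1$ are consistent with $\alpha_0=\rho_0\tau_0^{-1}$ and $\alpha_1=\rho_0(\beta_0-1)$.

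**Main obstacle.** The genuine difficulty is algebraic bookkeeping: the two conditions \eqref{eqn: condition1 on alpha-n for POP}–\eqref{eqn: condition2 on alpha-n for POP} are coupled recursions in $\alpha_n$, and I must show they are simultaneously consistent with the already-imposed recursion \eqref{eqn: condition for mixed recurrence relation} rather than over-determining the sequence. The hard part will be checking that the two para-orthogonality conditions collapse the constant term to unit modulus and the middle coefficient to the admissible real range \emph{without contradiction} — i.e. that solving \eqref{eqn: condition1 on alpha-n for POP} and \eqref{eqn: condition2 on alpha-n for POP} simultaneously pins $t_{n+1}(\rho_{n+1}q_{n+1})^{-1}=1$ identically. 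I would handle this by eliminating $\alpha_n$ between the two conditions, reducing them to a single identity in the base parameters $\rho_n,\tau_n,\beta_n$, and confirming that the range restriction $-2<\tau_n\rho_n^{-1}<0$ guarantees the reflection coefficient stays strictly inside the unit disk, which is precisely what places all zeros of $\mathcal{R}_n$ on the unit circle and completes the identification as para-orthogonal polynomials.
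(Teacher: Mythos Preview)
Your approach is essentially the paper's: reduce the recurrence \eqref{eqn: ttrr for Rn from ttrr for Qn after dividing by lambda-1} to the canonical para-orthogonal form by showing the constant term is $1$ and then invoking a characterization theorem. You are right that condition \eqref{eqn: condition2 on alpha-n for POP} forces $t_n=-r_n$ (hence the constant term $t_{n+1}\rho_{n+1}^{-1}q_{n+1}^{-1}=1$), and that \eqref{eqn: condition1 on alpha-n for POP} controls the $\lambda\mathcal{R}_{n-1}$ coefficient via $p_{n+1}=(2\rho_n+\tau_n)p_n$.

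The gap is in your step (iii) and your final paragraph. Merely having the recurrence $(\lambda+1)\mathcal{R}_n+d_n\lambda\mathcal{R}_{n-1}$ with $d_n$ real and negative is \emph{not} the correct hypothesis for para-orthogonality; the required condition is that $\{d_{n+1}\}$ be a \emph{positive chain sequence}. The paper obtains this explicitly: writing $\tau_n\rho_n^{-1}=-2(1-m_n)$ with $0<m_n<1$ (equivalent to $-2<\tau_n\rho_n^{-1}<0$) and using $p_{n+2}r_{n+1}^{-1}=2m_{n+1}$, one gets $d_{n+1}=(1-m_n)m_{n+1}$, which is by definition a chain sequence with parameter sequence $\{m_n\}$. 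The Favard-type theorem that then yields para-orthogonality is the one in \cite{Ranga-Favard-type-OPUC-JAT-2014}, not \cite{Ranga-szego-polynomials-2010-AMS} or \cite{Jones-Njasad-Thron-Moment-OP-CF-1989-BLMS}; the latter two do not furnish a converse from a three-term recurrence of this shape. Your talk of ``reflection coefficients in the unit disk'' conflates the Verblunsky coefficients of the associated Szeg\H{o} polynomials (which are derived \emph{after} para-orthogonality is established) with the hypothesis needed to establish it.

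Finally, the ``main obstacle'' you identify --- simultaneous consistency of \eqref{eqn: condition for mixed recurrence relation}, \eqref{eqn: condition1 on alpha-n for POP}, and \eqref{eqn: condition2 on alpha-n for POP} --- is not part of the proof: the theorem simply assumes all three hold, and the proof proceeds by direct substitution.
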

\begin{proof}
The conditions \eqref{eqn: condition1 on alpha-n for POP} and
\eqref{eqn: condition2 on alpha-n for POP} imply the relations
$p_{n+1}=(2\rho_n+\tau_n)p_n$ and $t_n=-r_n$, $n\geq2$. Further
\begin{align*}
\frac{p_{n+1}}{p_n}=2\rho_n+\tau_n
\Longrightarrow
\frac{p_{n+1}}{r_n}-\frac{\tau_n}{\rho_n}=2,
\quad
n\geq2.
\end{align*}
Hence, choosing $\tau_n\rho_n^{-1}=-2(1-m_n)$, so that $0<m_n<1$
(since $-2<\tau_n\rho_n^{-1}<0$), $n\geq1$
and
$p_{n+1}r_{n}^{-1}=2m_n$, $n\geq2$, the recurrence relation
\eqref{eqn: ttrr for Rn from ttrr for Qn after dividing by lambda-1}
for $\mathcal{R}_n$ reduces to
\begin{align}
\label{eqn: ttrr for Rn as POP with chain sequence}
\mathcal{R}_{n+1}(\lambda)=(\lambda+1)\mathcal{R}_n(\lambda)-
4d_{n+1}\lambda\mathcal{R}_{n-1}(\lambda),
\quad
n\geq1,
\end{align}
with $\mathcal{R}_0(\lambda)=1$ and
$\mathcal{R}_1(\lambda)=\lambda+1$.
Since $d_{n+1}=(1-m_n)m_{n+1}$, $n\geq1$,
is a positive chain sequence
(see \cite[Section 7.2]{Ismail-book} for the definition of chain sequence),
by the results obtained in
\cite{Ranga-Favard-type-OPUC-JAT-2014},
 it follows that
$\{\mathcal{R}_n(\lambda)\}_{n=1}^{\infty}$
is a sequence of para-orthogonal polynomials.
\end{proof}
The three term recurrence relation
\eqref{eqn: ttrr for Rn as POP with chain sequence}
has also been obtained
\cite[eq.~17]{Delsarte-Genin-split-Levinson-IEEE-1986}
in the study of Szeg\H{o} polynomials, wherein
the polynomials $\mathcal{R}_n(\lambda)$
are called the singular predictor polynomials with the notation
$``\lambda_{k}"$ 
equal to $-\tau_{n+1}\rho_{n+1}^{-1}$.
However we note that such three term recurrence relations
have been generalized to include complex parameters,
where the polynomials $\mathcal{R}_n(\lambda)$
are called Delsarte and Genin~$1$~para-orthogonal polynomials (DG1POP).
For this generalized theory and related discussion,
we refer to \cite{Ranga-Swami-POP-OPUC-ANM-2016,
Ranga-OPUC-chain-sequence-JAT-2013}
and references therein.

Further from
\cite[Theorem 4.1]{Ranga-Favard-type-OPUC-JAT-2014} there exists a
non-trivial probability measure on the unit circle such that for $n\geq1,$
\begin{align*}
\int_{\mathcal{C}}\zeta^{-n+k}\mathcal{R}_n(\zeta)(1-\zeta)d\mu(\zeta)=0,
\quad
k=0,1,\cdots,n-1,
\end{align*}
so that the corresponding polynomials $\mathcal{Q}_n(\lambda)$, $n\geq1$,
satisfy
\begin{align*}
\int_{\mathcal{C}}\zeta^{-n+k}\mathcal{Q}_n(\zeta)d\mu(\zeta)=0,
\quad
k=0,1,\cdots,n-1.
\end{align*}
Moreover, the monic polynomials
\begin{align*}
\phi_n(\lambda)=\mathcal{R}_n(\lambda)-2(1-m_n)\mathcal{R}_{n-1}(\lambda)
=
\mathcal{R}_n(\lambda)+\tau_n\rho_n^{-1}\mathcal{R}_{n-1}(\lambda),
\quad n\geq1,
\end{align*}
are orthogonal polynomials on the unit circle
 with respect to the probability measure $\mu$
and $\alpha_{n-1}=-\overline{\phi_n(0)}=
-(1+\tau_n\rho_n^{-1}), $ $n\geq1$,
\cite[Theorem 5.2]{Ranga-Favard-type-OPUC-JAT-2014}.
The parameters $\alpha_{n-1}$ are called Verblunsky coefficients
\cite{Simon-book-Part-1} and (since $-2<\tau_n\rho_n^{-1}<0$)
lie in the real interval $[-1,1]$ in the present case.
\begin{remark}
The polynomials $\phi_n(\lambda)$, $n\geq1$,
are a well-known class of $R_{I}$
polynomials that can be expressed as a linear combination
of consecutive polynomials of another class of $R_{I}$ polynomials
with the choice
$\alpha_n=\tau_n\rho_{n}^{-1}$, $n\geq1$, thus verifying
Corollary \ref{coro: corollary for choice of alpha-n for R-I polynomials}.
\end{remark}
\begin{remark}
By the results obtained in previous sections, a para-orthogonal polynomial
can always be obtained from a linear combination of $R_{I}$ polynomials
satisfying appropriate three term recurrence relations.
In fact, expressing the parameters
$\rho_n$, $\beta_n$ and $\tau_n$ in terms of $\alpha_n$
can lead to the identification of the class of $R_I$
polynomials whose linear combinations with constant
coefficients lead to para-orthogonal polynomials.
\end{remark}

\section{A hypergeometric case}
\label{sec: Section-3-illustration}
 Consider the recurrence relation of $R_{I}$ type
\begin{align*}
\mathcal{P}_{n+1}(\lambda)=
\frac{b+n}{c+n}\left(\lambda-\frac{b-c-n}{b+n}\right)\mathcal{P}_{n}(\lambda)-
\frac{n}{c+n}\lambda\mathcal{P}_{n-1}(\lambda),
\quad
n\geq1,
\end{align*}
where $\mathcal{P}_{0}(\lambda)=1$ and
$\mathcal{P}_{1}(\lambda)=\frac{b}{c}(\lambda-\frac{b-c}{b})$.
Hence we have the parameters as
\begin{align*}
\rho_{n}=\frac{b+n}{c+n},\quad
\beta_n=\frac{b-c-n}{b+n},
\quad
\tau_{n+1}=-\frac{n+1}{c+n+1},
\quad \gamma_n=0,
\quad n\geq0.
\end{align*}
Further,
$\mathcal{P}_n(\lambda)=F(-n,b;c;1-\lambda)$, $n\geq 0$,
which follows easily from the contiguous relation
\cite{Ranga-szego-polynomials-2010-AMS}
\begin{align*}
(a-c+1)F(a,b;c;\lambda)=
(2a-c&+2+(b-a-1)\lambda)F(a+1,b;c;\lambda)\\
&+(a+1)(\lambda-1)F(a+2,b;c;\lambda).
 \end{align*}
We consider $\mathcal{Q}_n(\lambda)=
\mathcal{P}_n(\lambda)+\alpha_n\mathcal{P}_{n-1}(\lambda)$, $n\geq0$,
as given in \eqref{eqn: Q-n as linear combination in theorem}
and our first aim is to construct the sequence
$\{\alpha_n\}_{n=0}^{\infty}$ so that
$\{\mathcal{Q}_n(\lambda)\}_{n=1}^{\infty}$
has a common zero at $\lambda=1$.
We start with certain initial verifications given at the end of Section
\ref{sec: Section-2-mixed recurrence relation}.

Clearly, $c\neq0$ in $F(-n,b;c;1-\lambda)$ and so $\beta_0\neq1$.
Since we also require $\beta_0\neq0,$ and $\beta_0\neq,-1$,
we exclude the relations $c=b$ and $c=2b$ respectively.
We further verify that $-(b+1)^{-1}=\tau_1\rho_1^{-1}\neq
\rho_0\beta_0^{-1}(1-\beta_0)^2=c(b-c)^{-1}$ as $c\neq-1$,
which by \eqref{eqn: condition for double zero of Q2 at 1}
implies $\mathcal{Q}_2(\lambda)$ does not have a double root at $\lambda=1$.
We now choose
\begin{align*}
\alpha_1=\rho_0(\beta_0-1)=\frac{b}{c}\left(\frac{b-c}{b}-1\right)=-1,
\end{align*}
so that from \eqref{eqn: condition for mixed recurrence relation},
$\mathcal{Q}_n(\lambda)$ has a common zero at $\lambda=1$, $n\geq1$, for
the unique sequence
$\alpha_n=-1$, $n\geq1$.
Moreover, as $b\neq0$ and $c\neq2b$, $-n(b+n)^{-1}=\tau_n\rho_n^{-1}\neq \alpha_n=-1$,
and $(b-c-n+1)(c+n-1)^{-1}=\rho_{n-1}\beta_{n-1}\neq-1$, $n\geq1$, respectively.
Thus, the linear combinations
\begin{align*}
\mathcal{Q}_n(\lambda)=F(-n,b;c;1-\lambda)-F(-n+1,b;c;1-\lambda),
\quad
n\geq1,
\end{align*}
satisfy the mixed recurrence relations
\eqref{eqn: mixed ttrr for Q-2} and \eqref{eqn: mixed ttrr for for Q-n}.
With $\alpha_{n+1}=-1$, $\gamma_n=0$, $n\geq0$, the relations in
Theorem $\ref{thm: ttrr for Q-n}$
yield
\begin{align*}
-p_n&=q_n=\frac{b}{c+n-1},\,\,
r_n=-\frac{b(b+n)}{(c+n)(c+n-1)},\,\,
s_n=\frac{b(2b-c+1)}{(c+n-1)(c+n)},\\
t_n&=-\frac{b(b-c-n+1)}{(c+n-1)(c+n)},\,\,
u_n=-v_n=\frac{(n-1)b}{(c+n-1)(c+n)},\,\,
w_n=0,
\quad n\geq2.
\end{align*}
For $n=1$, we have
\begin{align*}
p_1&=-\left(\frac{b}{c}+\tau_0\right),\,\,
q_1=\frac{b}{c},\,\,
r_1=-\frac{b+1}{c+1}\left(\frac{b}{c}+\tau_0\right),\,\,
s_1=\frac{\tau_0b}{c+1}+\frac{b(2b-c+1)}{c(c+1)},\nonumber\\
t_1&=-\frac{b(b-c)}{c(c+1)},\,\,
-u_1=v_1=\frac{\tau_0 b}{c+1},\,\,
w_1=0 .
\end{align*}
Hence we need to choose $\tau_0=-b/c$ so that $p_1=r_1=0$.
Then, from
\eqref{eqn: mixed ttrr for Q-2} and
\eqref{eqn: mixed ttrr for for Q-n}, we obtain
the mixed recurrence relations
\begin{align*}
\mathcal{Q}_2(\lambda)
&=
\frac{b-c+1}{c+1}\left(\lambda-\frac{b-c}{b-c+1}\right)
\mathcal{Q}_1(\lambda)+\frac{b}{c+1}\lambda(\lambda-1)
\mathcal{Q}_0(\lambda),\nonumber\\
\mathcal{Q}_{n+1}(\lambda)
&=
\frac{b+n}{c+n}\left(\lambda-\frac{b-c-n+1}{b+n}\right)
\mathcal{Q}_{n}(\lambda)-
\frac{n-1}{c+n}\lambda\mathcal{Q}_{n-1}(\lambda),
\quad n\geq 2.
\end{align*}
The initial conditions are $\mathcal{Q}_0(\lambda)=1$ and
$\mathcal{Q}_1(\lambda)=\frac{b}{c}(\lambda-1)$.
Note that $\rho_0=\frac{b}{c}$ gives $\alpha_0=-1$.
Moreover, from the power series representations
for hypergeometric functions, it
can be easily proved that
\begin{align*}
\mathcal{Q}_0(\lambda)=1,\quad \mathcal{Q}_n(\lambda)=
\frac{b}{c}(\lambda-1)F(-n+1,b+1;c+1;1-\lambda),
\quad n\geq1.
\end{align*}
To discuss the biorthogonality relations for
$\mathcal{Q}_n(\lambda)$,
we first note from
\eqref{eqn: definition of laurent polynomials for biorthogonality}
that
\begin{align*}
\sigma_i^{R}(\lambda)=
\frac{c+i}{c}\frac{(c+1)_{i-1}}{(-i+1)_{i-1}}
F(-i+1,b+1;c+1;1-\lambda),
\quad
i\geq2,
\end{align*}
with $\sigma_0^{R}(\lambda)=1$ and
$\sigma_1^{R}(\lambda)=\frac{c+1}{c}$.
Further, for $k\geq3$,
\begin{align*}
\chi_k^{L}(\lambda)=(-1)^{k+1}\frac{b}{c}
\frac{(\lambda-1)}{\lambda^{k+1}}
[F(-k,&b+1;c+1;1-\lambda)\\
&-\frac{b+k}{c+k}\lambda F(-k+1,b+1;c+1;1-\lambda)].
\end{align*}
Using another contiguous relation
\begin{align*}
(a-c)F(a-1,b;c,\lambda)+(c-b)F(a,b-1;c;\lambda)
+(\lambda-1)(a-b)F(a,b;c;\lambda)=0,
\end{align*}
it can be shown that
\begin{align*}
\chi_{k}^{L}(\lambda)=(-1)^{k+1}\frac{b(c-b)}{c(c+k)}
\frac{\lambda-1}{\lambda^{k+1}}
F(-k+1,b;c+1;1-\lambda),
\quad k\geq2.
\end{align*}
We also have $\chi_{0}^{L}(\lambda)=
\frac{b}{c\lambda}$ and
\begin{align*}
\chi_{1}^{L}(\lambda)=
\frac{b}{c}\frac{\lambda-1}{\lambda^2}
F(-1,b+1;c+1;1-\lambda)-
\frac{b(b+1)}{c(c+1)}\frac{\lambda-1}{\lambda}-
\frac{b}{(c+1)\lambda}.
\end{align*}
Note that we have written $\chi_{1}^{L}(\lambda)$ in the form
$\chi_{1}^{L}(\lambda)=
\sigma_2^{L}(\lambda)+\rho_1\sigma_1^{L}(\lambda)-
\frac{b}{(c+1)\lambda}$
and this can be further simplified to
$\chi_{1}^{L}(\lambda)=
\frac{-b}{(c+1)\lambda^2}F(-1,b;c;1-\lambda)$.

Let the weight function $\mu_{n,j,k}$ be the function
$\mu_{\lambda}$ evaluated at the zeros $\lambda_{n,j}$, $j=1,\cdots, n-1$ of
$\mathcal{Q}_n(\lambda)$, that is, at the $n-1$ zeros of
$F(-n+1,b+1;c+1;1-\lambda)$
which clearly does not include the point
$\lambda=1$.
Now, for $n\geq2$, we have
\begin{align*}
\mu_{\lambda}=\mathcal{Y}_{n-2}(\lambda)
[F(-n+2,b+1;c+1;1-\lambda)\times
F(-n+2,b+2;c+2;1-\lambda)]^{-1},
\end{align*}
where
\begin{align*}
\mathcal{Y}_{n-2}(\lambda)=\frac{c^2(-n+2)_{n-2}\lambda^{n-1}}
{(-1)^{n-1}(n-1)b(b+1)(c+2)_{n-2}(\lambda-1)},
\quad
n\geq2.
\end{align*}
By Chu-Vandermonde formula
\cite[p.12, (1.4.3)]{Ismail-book}
$F(-n+1,b+1;c+1;1)=\frac{(c-b)_{n-1}}{(c+1)_{n-1}}\neq0$.
This implies
$\lambda_{n,j}\neq0$, $j=1,\cdots n-1$.

Hence we obtain two finite sequences
$\{\sigma_i^{R}(\lambda)\}_{i=0}^{n-2}$
and
$\{\chi_k^{L}(\lambda)\}_{k=0}^{n-2}$
of hypergeometric functions
that are biorthogonal to each other with the weight
function $\mu_{\lambda}$, all quantities being evaluated
at $\lambda=\lambda_{n,j}$, $j=1,\cdots,n-1$, $n\geq2$.

Now, we discuss the para-orthogonality of polynomials
obtained from $\mathcal{Q}_n(\lambda)$. Since the
leading coefficient of $\mathcal{Q}_{n+1}(\lambda)$ is
$\kappa_n=\frac{(b)_{n+1}}{(c)_{n+1}}$, we
consider the polynomials
\begin{align*}
\mathcal{R}_n(\lambda)=
\frac{(c+1)_{n}}{(b+1)_{n}}F(-n,b+1,c+1,1-\lambda),
\quad
n\geq0.
\end{align*}
The other conditions obtained in the beginning of
Section \ref{sec: para-orthogonality relations} are
$-2<\tau_n\rho_n^{-1}<0$ and
$\tau_1\rho_1^{-1}=\rho_0\beta_0^{-1}(\beta_0^2-1)$.
The first one requires $b>-n/2$ for $n\geq1$ so that we have $b>-1/2$.
The second one requires
$c-b=(c-2b)(b+1)$ which means $c=2b+1$.
It can be verified that the other conditions
\eqref{eqn: condition1 on alpha-n for POP} and
\eqref{eqn: condition2 on alpha-n for POP}
for the choice of $\alpha_n$ are also satisfied for $c=2b+1$.
Hence $\mathcal{R}_n(\lambda)$, $n\geq1$,
satisfies the recurrence relation
\eqref{eqn: ttrr for Rn as POP with chain sequence} where, using
\eqref{eqn: ttrr for Rn from ttrr for Qn after dividing by lambda-1},
we have
\begin{align*}
d_{n+1}=
-\frac{1}{4}\frac{\tau_n}{\rho_n}\frac{p_{n+2}}{r_{n+1}}
=
\frac{1}{4}\frac{n(2b+n+1)}{(b+n)(b+n+1)},
\quad
n\geq1.
\end{align*}
The parameter sequence is given by $m_n=\frac{p_{n+1}}{2r_n}=
\frac{2b+n}{2(b+n)}$, $n\geq2$, with $m_1=1+\frac{\tau_1}{2\rho_1}=
\frac{2b+1}{2(b+1)}$.
Further, the Szeg\H{o} polynomials are given by
\begin{align*}
\phi_n(\lambda)=\frac{(2b+2)_{n}}{(b+1)_n}F(-n,b;&2b+1;1-\lambda)\\
&-\frac{n}{(b+n)}
\frac{(2b+2)_{n-1}}{(b+1)_{n-1}}F(-n+1,b;2b+1;1-\lambda),
\end{align*}
with the Verblunsky coefficients
\begin{align*}
\alpha_{n-1}=-(1+\tau_n\rho_n^{-1})=-\frac{b}{b+n},
\quad
n\geq1.
\end{align*}

We conclude with an observation.
From results illustrated above,
for $c=2b+1$ we have,
\begin{align}
\label{eqn: linear combination for Qn obtained in illustration}
\lefteqn{\frac{b}{2b+1}(\lambda-1)F(-n+1,b+1;2b+2;1-\lambda)}\nonumber\\
&&=
F(-n,b;2b+1;1-\lambda)-
F(-n+1,b;2b+1;1-\lambda).
\end{align}
Denoting the monic polynomials
\begin{align*}
\mathcal{R}_n(b;\lambda)
&=
\frac{(2b)_{n}}{(b)_n}F(-n,b,2b,1-\lambda),
\qquad n\geq1 \qquad\mbox{and}\\
\mathcal{\phi}_n(b;\lambda)
&=
\frac{(2b+1)_{n}}{(b+1)_n}F(-n,b+1,2b+1,1-\lambda),
\qquad n\geq1,
\end{align*}
it has been proved that (see the concluding remarks at the end of
\cite{Ranga-szego-polynomials-2010-AMS})
if $b>-1/2$, $\phi_n(b;\lambda)$
is a Szeg\H{o} polynomial with respect to the weight
function $[\sin{\theta/2}]^{2b}$ and in fact, is obtained from
Gegengauer polynomials using the Szeg\H{o} transformation.
Further, the reversed Szeg\H{o} polynomials
$\phi_n^{\ast}(b;\lambda)=
\lambda^n\overline{\phi_n(b;1/\bar{\lambda})}$
are given by
\begin{align*}
\phi_n^{\ast}(b;\lambda)=
\frac{(2b+1)_n}{(b+1)_n}F(-n,b;2b+1;1-\lambda).
\end{align*}
Thus, multiplying both sides of
\eqref{eqn: linear combination for Qn obtained in illustration}
by $\frac{(2b+1)_{n+1}}{(b)_{n+1}}$,
we observe that
\begin{align}
\label{eqn: definition obtain in illustration of POP}
\mathcal{R}_{n}(b+1;\lambda)=\frac{2b+n+1}{b}
\left[\frac{\phi_n^{\ast}(b;\lambda)-
\frac{2b+n+2}{b+n}\phi_{n-1}^{\ast}(b;\lambda)}
{\lambda-1}\right],
\quad n\geq1.
\end{align}
On the other hand, we obtain from
\cite[Theorem 5.1]{Ranga-szego-polynomials-2010-AMS}
\begin{align}
\label{eqn: definition usual of POP}
\mathcal{R}_{n}(b+1;\lambda)=
\left(\frac{2b+n}{b+n}\right)^{-1}[\phi_n(b+1;\lambda)+
\phi_n^{\ast}(b+1;\lambda)],
\quad n\geq1,
\end{align}
which follows the usual definition of a para-orthogonal polynomial
\cite{Jones-Njasad-Thron-Moment-OP-CF-1989-BLMS}.

For the purpose of illustration, we plot the distribution of the zeros of
$\mathcal{R}_{n}(b+1;\lambda)$,
$\mathcal{\phi}_{n}^{\ast}(b;\lambda)$,
$\mathcal{\phi}_{n-1}^{\ast}(b;\lambda)$,
$\mathcal{\phi}_{n}(b+1;\lambda)$
and
 $\mathcal{\phi}_{n}^{\ast}(b+1;\lambda)$
  for $n=12$ and $b=0.5$.
\begin{center}
\begin{figure}[ht!]
  \begin{subfigure}[b]{0.4\textwidth}
    \includegraphics[width=\textwidth]{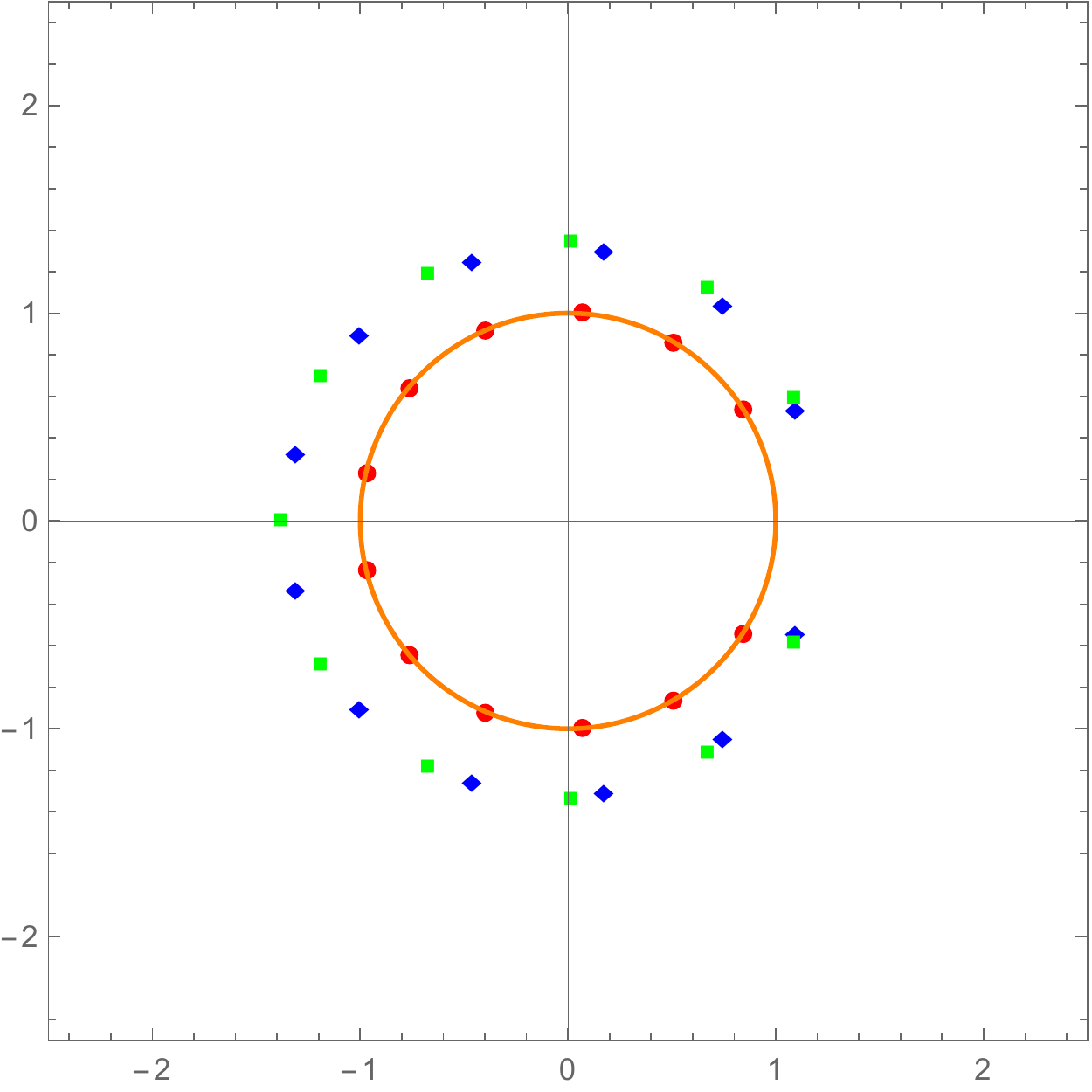}
    \caption{The case \eqref{eqn: definition obtain in illustration of POP}}
    \label{fig:different definition POP}
  \end{subfigure}
  \hfill
  \begin{subfigure}[b]{0.4\textwidth}
    \includegraphics[width=\textwidth]{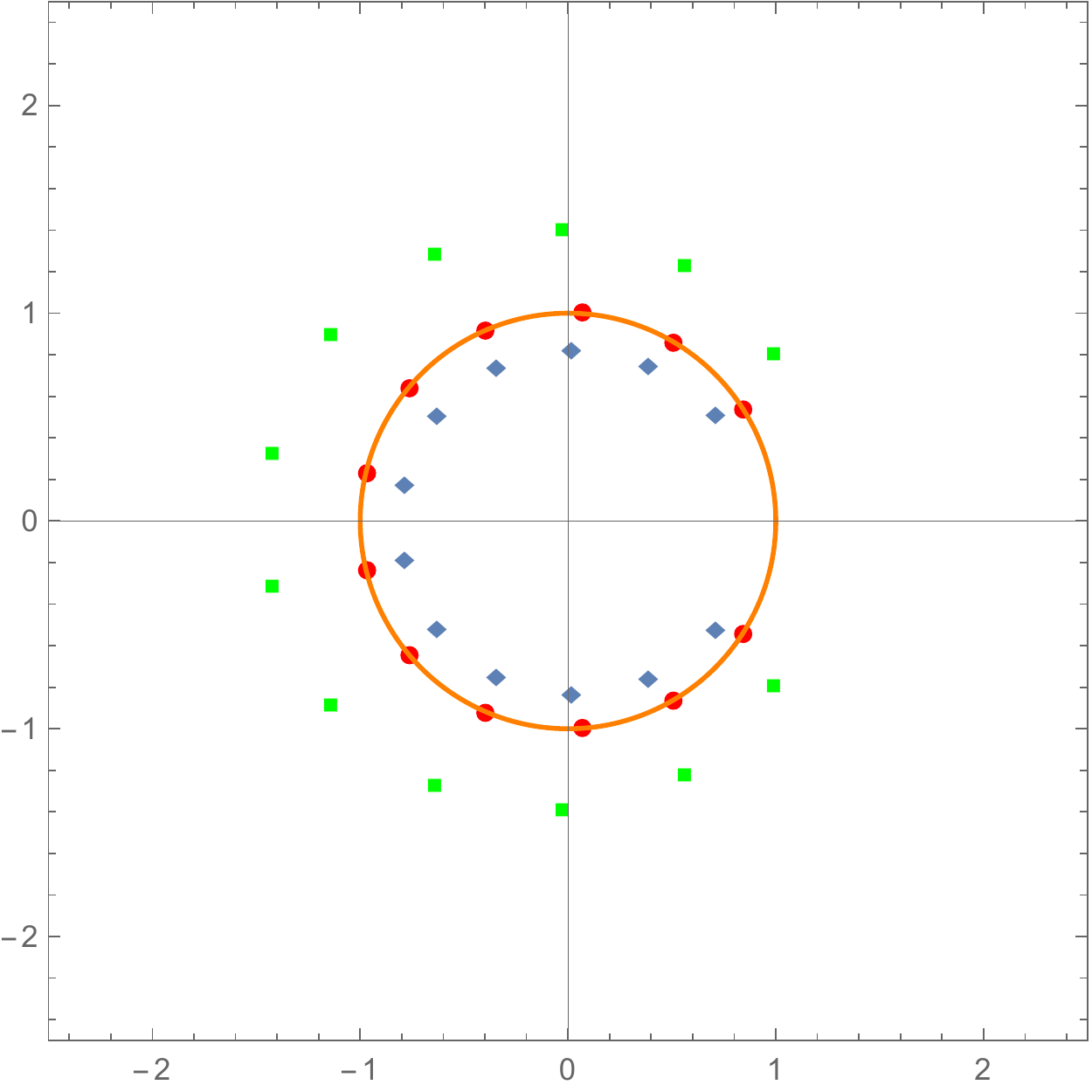}
    \caption{The case \eqref{eqn: definition usual of POP}}
    \label{fig:usual definition POP}
  \end{subfigure}
  \caption{The zeros of $\mathcal{R}_{n}(b+1;\lambda)$,
   $\mathcal{\phi}_{n}(b+1;\lambda)$,
   $\mathcal{\phi}_{n}^{\ast}(b+1;\lambda)$,
   $\mathcal{\phi}_{n}^{\ast}(b;\lambda)$,
   $\mathcal{\phi}_{n-1}^{\ast}(b;\lambda)$
  for $n=12$ and $b=0.5$.}
\end{figure}
\end{center}
In figures \eqref{fig:different definition POP} and \eqref{fig:usual definition POP},
the circular dots ($\bullet$ in red) are the zeros
of $\mathcal{R}_{12}(1.5,\lambda)$ lying on the unit circle,
the squares ({\tiny{$\blacksquare$}} in green) are of $\mathcal{\phi}_{11}^{\ast}(0.5;\lambda)$ and
$\mathcal{\phi}_{12}^{\ast}(1.5;\lambda)$ respectively
while the diamonds ({\tiny{$\blacklozenge$}} in blue) are of
 $\mathcal{\phi}_{12}^{\ast}(0.5;\lambda)$ and  $\mathcal{\phi}_{12}(1.5;\lambda)$ respectively.

\bibliographystyle{amsplain}

\begin{thebibliography}{99}

\bibitem{Alfaro-linear-combinations-JCAM-2010}
M. Alfaro, Francisco Marcell\'{a}n, Ana Pe\~{n}a\ and\ M. Luisa Rezola,
When do linear combinations of orthogonal polynomials yield
new sequences of orthogonal polynomials?,
J. Comput. Appl. Math.
{\bf 233} (2010), no.~6, 1446--1452.

\bibitem{Askey-discussion-Szego-paper-1982}
R. Askey,
Discussion of Szeg\"{o}'s paper ``Beitr\"{a}ge zur Theorie der Toeplitzschen Formen''.
In:R. Askey, editor. Gabor Szeg\"{o}.
Collected works. Vol. I. Boston, MA: Birkh\"{a}user; 1982;
p. 303--305

\bibitem{Askey-problems-on-SF-computations-1985}
R. Askey,
Some problems about special functions and computations,
Rend. Sem. Mat. Univ. Politec. Torino
{\bf 1985}, Special Issue, 1--22.

\bibitem{Ranga-Swami-POP-OPUC-ANM-2016}
C. F. Bracciali, A. Sri Ranga\ and\ A. Swaminathan,
Para-orthogonal polynomials on the unit circle
satisfying three term recurrence formulas,
Appl. Numer. Math. {\bf 109} (2016), 19--40.

\bibitem{Brezinski-Driver-ANM-2004-quasi-orthogonality}
C. Brezinski, K. A. Driver\ and\ M. Redivo-Zaglia,
Quasi-orthogonality with applications to some families
of classical orthogonal polynomials,
Appl. Numer. Math.
{\bf 48} (2004), no.~2, 157--168.


\bibitem{Ranga-Favard-type-OPUC-JAT-2014}
K. Castillo, M. S. Costa, A. Sri Ranga\ and\ D. O. Veronese,
A Favard type theorem for orthogonal polynomials on the
unit circle from a three term recurrence formula,
J. Approx. Theory {\bf 184} (2014), 146--162.

\bibitem{Ranga-OPUC-chain-sequence-JAT-2013}
M. S. Costa, H. M. Felix\ and\ A. Sri Ranga,
Orthogonal polynomials on the unit circle and chain sequences,
J. Approx. Theory {\bf 173} (2013), 14--32.

\bibitem{Chihara-quasi-orthogonalit-AMS-1957}
T. S. Chihara,
On quasi-orthogonal polynomials,
Proc. Amer. Math. Soc. {\bf 8} (1957), 765--767.

\bibitem{Delsarte-Genin-split-Levinson-IEEE-1986}
P. Delsarte\ and\ Y. V. Genin,
The split Levinson algorithm,
IEEE Trans. Acoust. Speech Signal Process.
{\bf 34} (1986), no.~3, 470--478.

\bibitem{Dickinson-quasi-orthogonality-AMS-1961}
D. Dickinson,
On quasi-orthogonal polynomials,
Proc. Amer. Math. Soc. {\bf 12} (1961), 185--194.

\bibitem{Draux-quasi-order-r-ITSF-2016}
A. Draux,
On quasi-orthogonal polynomials of order $r$,
Integral Transforms Spec. Funct.
{\bf 27} (2016), no.~9, 747--765.

\bibitem{Driver-Muldoon-common-zeros-Laguerre-JAT-2015}
K. Driver\ and\ M. E. Muldoon,
Common and interlacing zeros of families of Laguerre polynomials,
J. Approx. Theory {\bf 193} (2015), 89--98.

\bibitem{Fejer-quasi-orthogonality-1933}
L. Fej\'{e}r,
Mechanische Quadraturen mit positiven
Cotesschen Zahlen, Math. Z.
{\bf 37} (1933), 287--309.

\bibitem{Gibson-common-zeros-JAT-2000}
P. C. Gibson,
Common zeros of two polynomials in an orthogonal sequence,
J. Approx. Theory
{\bf 105} (2000), no.~1, 129--132.

\bibitem{Hendriksen-Njastad-biorthogonal-derivative-Rocky-1991}
E. Hendriksen\ and\ O. Nj\aa stad,
Biorthogonal Laurent polynomials with biorthogonal derivatives,
Rocky Mountain J. Math. {\bf 21} (1991), no.~1, 301--317.

\bibitem{Ismail-book}
M. E. H. Ismail,
{\it Classical and quantum orthogonal polynomials in one variable},
        reprint of the 2005 original,
        Encyclopedia of Mathematics and its Applications, 98,
        Cambridge Univ. Press, Cambridge, 2009.

\bibitem{Ismail-Masson-generalized-orthogonality-JAT-1995}
M. E. H. Ismail\ and\ D. R. Masson,
Generalized orthogonality and continued fractions,
J. Approx. Theory
{\bf 83} (1995), no.~1, 1--40.

\bibitem{Jones-Njasad-Thron-Moment-OP-CF-1989-BLMS}
        W. B. Jones, O. Nj\aa stad\ and\ W. J. Thron,
        Moment theory, orthogonal polynomials, quadrature, and continued fractions
        associated with the unit circle,
        Bull. London Math. Soc.
        {\bf 21} (1989), no.~2, 113--152.

\bibitem{Jones-Thron-strong-stieltjes-moment-AMS-1980}
W. B. Jones, W. J. Thron\ and\ H. Waadeland,
A strong Stieltjes moment problem,
Trans. Amer. Math. Soc.
{\bf 261} (1980), no.~2, 503--528.

\bibitem{Jordaan-mixed-recurrence-Acta-Hungarica-2010}
K. Jordaan\ and\ F. To\'{o}kos,
Mixed recurrence relations and interlacing of the zeros of some $q$-orthogonal
polynomials from different sequences,
Acta Math. Hungar. {\bf 128} (2010), no.~1-2, 150--164.

\bibitem{Marcellan-linear-combinations-1996-advances}
F. Marcell\'{a}n, F. Peherstorfer\ and\ R. Steinbauer,
Orthogonality properties of linear combinations
of orthogonal polynomials,
Adv. Comput. Math. {\bf 5} (1996), no.~4, 281--295.

\bibitem{Riesz-quasi-orthogonality}
M. Riesz,
Sur le probl\`{e}me des moments,
Troisi\`{e}me Note, Ark. Mat. Fys.
{\bf 17} (1923), 1--52.

\bibitem{Shohat-quasi-orthogonality-AMS-1937}
J. Shohat,
On mechanical quadratures, in particular, with positive coefficients,
Trans. Amer. Math. Soc.
{\bf 42} (1937), no.~3, 461--496.

\bibitem{Silva-Ranga-bounds-complex-zeros-JAT-2005}
A. P. da Silva\ and\ A. Sri Ranga,
Polynomials generated by a three term recurrence relation: bounds for complex zeros,
 Linear Algebra Appl.
 {\bf 397} (2005), 299--324.

\bibitem{Simon-book-Part-1}
B. Simon,
{\it Orthogonal polynomials on the unit circle. Part 1},
American Mathematical Society Colloquium Publications, 54, Part 1, American Mathematical Society, Providence, RI, 2005.

\bibitem{Ranga-szego-polynomials-2010-AMS}
A. Sri Ranga,
Szeg\H o polynomials from hypergeometric functions,
Proc. Amer. Math. Soc.
{\bf 138} (2010), no.~12, 4259--4270.

\bibitem{Szego-book}
G. Szeg\"{o},
{\it Orthogonal polynomials}, fourth edition,
American Mathematical Society, Providence, RI, 1975.

\bibitem{Koepf-mixed-recurrence-ANM-2018}
D. D. Tcheutia, A. S. Jooste\ and\ W. Koepf, Mixed recurrence equations and interlacing properties for zeros of sequences of classical $q$-orthogonal polynomials, Appl. Numer. Math. {\bf 125} (2018), 86--102.

\bibitem{Temme-biorthogonal-Constapprx-1986}
N. M. Temme,
Uniform asymptotic expansion for a class of polynomials biorthogonal on the unit circle,
Constr. Approx. {\bf 2} (1986), no.~4, 369--376.

\bibitem{Wong-1st-2nd-kind-POP-JAT-2007}
M. L. Wong, First and second kind paraorthogonal polynomials and their zeros, J. Approx. Theory {\bf 146} (2007), no.~2, 282--293.

\bibitem{Zhedanov-ortho-polygons-JAT-1999}
A. Zhedanov, On the polynomials orthogonal on regular polygons, J. Approx. Theory {\bf 97} (1999), no.~1, 1--14. MR1676314

\end{thebibliography}

\end{document}